\numberwithin{equation}{section}
\numberwithin{figure}{section}
\newcommand\R{\mathbb{R}}
\newcommand\Z{\mathbb{Z}}
\newcommand\lam{\lambda}
\newcommand\Lam{\Lambda}
\newcommand\sig{\sigma}
\newcommand\eps{\varepsilon}
\renewcommand\le{\leqslant}
\renewcommand\leq{\leqslant}
\renewcommand\geq{\geqslant}
\newcommand\sbt{\subset}
\renewcommand\hat{\widehat}
\renewcommand\Im{\operatorname{Im}}
\newcommand{\ft}[1]{\widehat #1}
\newcommand{\dotprod}[2]{\langle #1 , #2 \rangle}
\newcommand{\supp}{\operatorname{supp}}
\theoremstyle{plain}
\newtheorem{thm}{Theorem}[section]
\newtheorem{lem}[thm]{Lemma}
\newtheorem{corollary}[thm]{Corollary}
\newtheorem*{claim*}{Claim}
\newcommand{\thmref}[1]{Theorem~\ref{#1}}
\newcommand{\secref}[1]{Section~\ref{#1}}
\newcommand{\lemref}[1]{Lemma~\ref{#1}}
\newcommand{\corref}[1]{Corollary~\ref{#1}}
\newcommand{\remref}[1]{Remark~\ref{#1}}
\theoremstyle{definition}
\newtheorem*{definition*}{Definition}
\newtheorem*{remarks*}{Remarks}
\newtheorem*{remark*}{Remark}
\newtheorem{remark}[thm]{Remark}
\newenvironment{enumerate-roman}
{\begin{enumerate}
\addtolength{\itemsep}{5pt}
}
{\end{enumerate}}
\newenvironment{enumerate-alph}
{\begin{enumerate}
\addtolength{\itemsep}{5pt}
}
{\end{enumerate}}
\newenvironment{enumerate-num}
{\begin{enumerate}
\addtolength{\itemsep}{5pt}
}
{\end{enumerate}}
\newenvironment{enumerate-text}
{\begin{enumerate}
\addtolength{\itemsep}{5pt}
}
{\end{enumerate}}
\begin{document}

\title{Poisson summation formulas involving the sum-of-squares function}

\author{Nir Lev}
\address{Department of Mathematics, Bar-Ilan University, Ramat-Gan 5290002, Israel}
\email{levnir@math.biu.ac.il}

\author{Gilad Reti}
\address{Department of Mathematics, Bar-Ilan University, Ramat-Gan 5290002, Israel}
\email{gilad.reti@gmail.com}

\date{December 12, 2020}
\subjclass[2010]{11E25, 42B10, 52C23}
\keywords{Quasicrystals, Poisson summation formulas, sum of squares}
\thanks{Research supported by ISF Grant No.\ 227/17 and ERC Starting Grant No.\ 713927.}

\begin{abstract}
We obtain new Poisson type summation formulas with nodes $\pm \sqrt{n}$ and with weights involving the function $r_k(n)$ that gives the number of representations of a positive integer $n$ as the sum of $k$ squares. Our results extend summation formulas due to Guinand and Meyer that involve the sum-of-three-squares function $r_3(n)$.
\end{abstract}

\maketitle


\section{Introduction} \label{secI1}

\subsection{}

The classical Poisson summation formula asserts that for any
function $f$ on $\R$ satisfying sufficient smoothness and decay assumptions, we have
\begin{equation}
  \label{eqA1.20}
  \sum_{n\in \Z} f(n) = \sum_{n\in \Z} \ft{f}(n)
\end{equation}
where $\ft{f}(t)=\int_{\R} f(x) \exp(-2 \pi i tx) dx$ is the Fourier transform of $f$. The equality \eqref{eqA1.20} holds in particular for any function $f$ from the Schwartz class.
There is an equivalent way to state the Poisson summation formula, by saying that the measure $\mu=\sum_{n\in \Z} \delta_n$
(the sum of unit masses at the integer points) satisfies $\ft{\mu}=\mu$, where $\ft{\mu}$ is the Fourier transform of $\mu$ understood in the sense of temperate distributions.

The problem of which other Poisson type formulas may exist,
was studied by different authors. A recent result proved in \cite{LO13, LO15} states that if $\mu = \sum_{\lam \in \Lam} a_\lam \delta_\lam$ is a pure point measure on $\R$ supported on a uniformly discrete set $\Lam$, and if the Fourier transform $\ft{\mu}$ (in the sense of temperate distributions) is also a pure point measure $\ft{\mu} = \sum_{s \in S} b_s \delta_s$ supported on a uniformly discrete set $S$, then the measure $\mu$ can be obtained from  Poisson's  formula using dilation and a finite number of shifts, modulations, and taking linear combinations. (We recall that a set is said to be \emph{uniformly discrete} if the distance between any two of its points is bounded from below by some positive constant.)

On the other hand, in \cite{LO16} the authors established the existence of pure point measures $\mu$ whose distributional Fourier transform $\ft{\mu}$ is also a pure point measure, and such that the supports of both $\mu$ and $\ft{\mu}$ are \emph{locally finite} sets (i.e.\ they have no finite accumulation points), but the support of $\mu$ contains only finitely many elements of any arithmetic progression. In particular, such a measure $\mu$ cannot be obtained from Poisson's summation formula using the procedures mentioned above (see also \cite{Kol16}).

We note that interest in the subject has been largely inspired by the experimental discovery in the middle of 80's of quasicrystalline materials, i.e.\ nonperiodic  atomic structures that have a discrete diffraction pattern,
see  \cite{Mey95}, \cite{Lag00}. For recent related work see e.g.\ \cite{Mey16}, \cite{Mey17}, \cite{LO17}, \cite{Fav18} and the references therein.

\subsection{}
There is a remarkable summation formula due to Guinand \cite[p.\ 265]{Gui59} which states that if $\varphi$ is an odd Schwartz function on $\R$, and if $\psi = \ft{\varphi}$ is the Fourier transform of $\varphi$, then 
\begin{equation}
  \label{eqA1.3}
  \varphi'(0) + \sum_{n=1}^{\infty} \frac{r_3(n)}{\sqrt{n}} \varphi(\sqrt{n})
  = i \psi'(0) + i \sum_{n=1}^{\infty} \frac{r_3(n)}{\sqrt{n}}  \psi(\sqrt{n})
\end{equation}
where $r_k(n)$ is the number of representations of $n$ as the sum of $k$ squares, that is,
\begin{equation}
  \label{eqA1.2}
  r_k(n) = \# \{m \in \Z^k : |m|^2 = n\}.
\end{equation}

Notice that while \eqref{eqA1.3} holds for odd functions only, one can obtain a summation formula that is valid for an arbitrary Schwartz function $f$ (i.e.\ not necessarily an odd function) by applying \eqref{eqA1.3} to $\varphi(t) := f(t) - f(-t)$ and $\psi(t) := \ft{f}(t) - \ft{f}(-t)$. This yields a nonclassic Poisson type  formula with nodes at the points $\pm \sqrt{n}$ and with weights involving the sum-of-three-squares function $r_3(n)$.

The summation formula \eqref{eqA1.3} can be equivalently stated by saying that  the Fourier transform of the temperate distribution
\begin{equation}
  \label{eqA1.21}
  \sig_3 := -2 \delta'_0 +  \sum_{n=1}^{\infty} \frac{r_3(n)}{\sqrt{n}}  (\delta_{\sqrt{n}} - \delta_{-\sqrt{n}})
\end{equation}
is $-i \sig_3$. In particular, the distribution $\sig_3$ is supported on the locally finite set of points $\pm \sqrt{n}$, and its Fourier transform $\ft{\sig}_3$ is supported on the same set.

Guinand's formula \eqref{eqA1.3} remained little known until it was recently rediscovered by Meyer, see \cite{Mey16}. In that paper, a new proof of Guinand's formula was given, and moreover new examples of Poisson type summation formulas generalizing Guinand's one were obtained.

There is a recent result due to Radchenko and Viazovska \cite{RV19} which states that a Schwartz function $f$ is uniquely determined by the values of $f$ and $\ft{f}$ at the points $\pm \sqrt{n}$ and the values $f'(0)$ and ${\ft{f}}\,'(0)$. Moreover, there exists a linear summation formula that recovers the function $f$ from these values \cite[Theorems 1 and 7]{RV19}. As pointed out in \cite[pp.\ 77--78]{RV19}, one can use this result to derive Guinand's formula \eqref{eqA1.3}.

Remarkably, the Poisson and Guinand formulas (and their linear combinations) are \emph{the only} weighted summation formulas that involve only the values of $f$ and $\ft{f}$ at the points $\pm \sqrt{n}$ and the values $f'(0)$ and ${\ft{f}}\,'(0)$. In other words, if $\sig$ is a temperate distribution such that both $\sig$ and its Fourier transform $\ft{\sig}$ can be expressed as a weighted sum of terms of the form $\delta_0$, $\delta'_0$, $\delta_{\sqrt{n}}$ or  $\delta_{- \sqrt{n}}$ $(n=1,2,3,\dots)$, then $\sigma$ must be a linear combination of the Poisson measure $\mu = \sum_{n \in \Z} \delta_n$ and the Guinand distribution $\sig_3$. This is a consequence of \cite[Theorem 5.2]{Via18}. We note that the Poisson component of $\sig$ corresponds to its even part, while the Guinand component is the odd part of $\sig$.


\section{Results}
\label{sec:results}

\subsection{}
In this paper we obtain new Poisson type summation formulas with nodes $\pm \sqrt{n}$, and with weights involving the function $r_k(n)$ defined by \eqref{eqA1.2} that gives the number of representations of $n$ as the sum of $k$ squares.

For each positive integer $k$, consider a distribution $\sig_k$ on $\R$ defined by
\begin{equation}
  \label{eqA1.1}
  \sig_k := -2 \delta'_0 +  \sum_{n=1}^{\infty} \frac{r_k(n)}{\sqrt{n}}  (\delta_{\sqrt{n}} - \delta_{-\sqrt{n}}).
\end{equation}
We observe that $\sum_{n=1}^{N} r_k(n) n^{-1/2}$ increases polynomially in $N$, which implies that
$\sig_k$ is a temperate distribution. It is obtained from Guinand's distribution \eqref{eqA1.21} by replacing the sum-of-three-squares function $r_3(n)$ with the sum-of-$k$-squares function $r_k(n)$.

It is clear that $\sig_k$ is an odd distribution supported on the set of points $\pm \sqrt{n}$. Guinand's result states that for $k=3$, the Fourier transform $\ft{\sig}_3$ is supported on the same set of points, and in fact, $\ft{\sig}_3 = -i \sig_3$. In the present paper we obtain an extension of this result to all odd values of $k$ greater than $3$. Our result provides an explicit expression for the Fourier transform $\ft{\sig}_k$, which implies in particular that it is again supported on the points $\pm \sqrt{n}$. However we will see that $\ft{\sig}_k$ is not $-i \sig_k$ if $k > 3$.

\subsection{}
For simplicity we first state the result for $k=5$.

\begin{thm}
  \label{thmA1}
  Let $\varphi$ be an odd Schwartz function on $\R$, and let $\psi = \ft{\varphi}$  be the Fourier transform of $\varphi$. Then
  \begin{equation}
    \label{eqA1.8}
    \varphi'(0) +  \sum_{n=1}^{\infty} \frac{r_5(n)}{\sqrt{n}} \varphi(\sqrt{n})
    =  - \frac{i}{6 \pi} \psi'''(0) + \frac{i}{2 \pi }
    \sum_{n=1}^{\infty} \frac{r_5(n)}{n^{3/2}} \Big[
      \psi(\sqrt{n}) - \sqrt{n} \, \psi'(\sqrt{n}) \Big].
  \end{equation}
\end{thm}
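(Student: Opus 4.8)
The plan is to derive \eqref{eqA1.8} from the classical Poisson summation formula on the lattice $\Z^5$, applied to a suitable radial Schwartz function built from $\varphi$. Given the odd Schwartz function $\varphi$, I would set $f_0(r):=\varphi(r)/r$. Since $\varphi$ is odd and Schwartz, $f_0$ extends to an \emph{even} Schwartz function on $\R$, so that $F(x):=f_0(|x|)$ is a radial Schwartz function on $\R^5$ (a radial function is Schwartz precisely when its profile is an even Schwartz function of $r$). Writing the five-dimensional Poisson formula $\sum_{m\in\Z^5}F(m)=\sum_{m\in\Z^5}\ft F(m)$ and collecting the lattice points lying on each sphere $|m|^2=n$, the left-hand side becomes
\[
  f_0(0)+\sum_{n=1}^\infty r_5(n)\,f_0(\sqrt n)
  =\varphi'(0)+\sum_{n=1}^\infty \frac{r_5(n)}{\sqrt n}\,\varphi(\sqrt n),
\]
where $f_0(0)=\lim_{r\to0}\varphi(r)/r=\varphi'(0)$ because $\varphi(0)=0$. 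This reproduces exactly the left-hand side of \eqref{eqA1.8}.

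The heart of the matter is to identify the radial profile of $\ft F$ in dimension five. Since $\varphi$ is odd, the associated three-dimensional radial transform is elementary: a direct sine-transform computation (which, upon summation over $\Z^3$, reproduces Guinand's formula \eqref{eqA1.3}) shows that the profile of the $\R^3$-radial transform of $f_0$ equals $i\psi(s)/s$, where $\psi=\ft\varphi$. To pass from dimension three to dimension five I would invoke the dimension-descent relation for radial Fourier transforms, $\mathcal F_{d+2}[f_0](s)=-\tfrac{1}{2\pi s}\,\tfrac{d}{ds}\mathcal F_{d}[f_0](s)$, which yields
\[
  \mathcal F_5[f_0](s)=-\frac{1}{2\pi s}\frac{d}{ds}\!\left(\frac{i\psi(s)}{s}\right)
  =\frac{i}{2\pi}\cdot\frac{\psi(s)-s\,\psi'(s)}{s^3}.
\]
Evaluating $\sum_{m\in\Z^5}\ft F(m)$ by spheres as before, the contribution of the terms with $n\ge1$ is $\frac{i}{2\pi}\sum_{n\ge1}\frac{r_5(n)}{n^{3/2}}\bigl[\psi(\sqrt n)-\sqrt n\,\psi'(\sqrt n)\bigr]$, which matches the series on the right of \eqref{eqA1.8}.

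It remains to compute the constant term $\mathcal F_5[f_0](0)=\lim_{s\to0}\mathcal F_5[f_0](s)=\int_{\R^5}F$. Here I would use that $\psi$ is odd, so its Taylor expansion at the origin has only odd powers, $\psi(s)=\psi'(0)\,s+\tfrac16\psi'''(0)\,s^3+O(s^5)$. Substituting into the profile, the linear terms cancel and one finds $\psi(s)-s\,\psi'(s)=-\tfrac13\psi'''(0)\,s^3+O(s^5)$, so the limit equals $-\tfrac{i}{6\pi}\psi'''(0)$, which is precisely the remaining term in \eqref{eqA1.8}. The main points to justify carefully are that $f_0$ indeed produces a genuine radial Schwartz function on $\R^5$ (so that Poisson summation applies and the rearrangement into spherical shells is absolutely convergent) and the clean extraction of the value at the origin via the odd Taylor expansion; the verification of the descent step is then a routine, if bookkeeping-heavy, differentiation.
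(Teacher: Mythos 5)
Your proof is correct, and at its core it rests on the same idea as the paper's---lifting $\varphi$ to the radial function $F(x)=\varphi(|x|)/|x|$ on $\R^5$ and exploiting five-dimensional Poisson summation---but your implementation is genuinely different and more elementary. The paper never applies the classical Poisson formula directly: it first proves a general distributional theorem (\thmref{thmA5}, extended in \thmref{thmA8}) valid for an arbitrary Fourier pair of measures supported on locally finite sets, with the technical work done by Hadamard's lemma (\lemref{lemC2}), the radial extension lemma (\lemref{lemC3}) and pairing $\sig$ against test functions, and only then specializes to $\mu=\sum_{m\in\Z^5}\delta_m$ (\thmref{thmE1}). Its key analytic input, the explicit odd-dimensional radial Fourier transform formula (\thmref{thmD1}), is proved by induction on the dimension using the three-term Bessel recurrence of \lemref{lemD2}, and the value at the origin is obtained in \remref{remD1} from the surface area of the unit sphere. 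You instead obtain the $\R^5$ profile by a single application of the descent relation $\mathcal{F}_{d+2}[f_0](s)=-\tfrac{1}{2\pi s}\tfrac{d}{ds}\mathcal{F}_{d}[f_0](s)$ to the elementary $\R^3$ sine-transform computation (which the paper quotes from Meyer only as motivation for \thmref{thmA5}), and you extract the constant term from the odd Taylor expansion of $\psi$ rather than from a sphere-area computation; both of these steps check out, including the constants. What your route buys is a short, self-contained proof of \thmref{thmA1}: since $F$ is an honest Schwartz function, classical Poisson summation plus absolutely convergent rearrangement into spherical shells suffices, and no temperate-distribution machinery is needed. What the paper's route buys is generality: \thmref{thmA5}, \thmref{thmA8} and \corref{corA6} for arbitrary measure pairs, and all odd $k$ simultaneously. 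The one step you should cite or prove rather than call ``routine'' is the descent relation with that precise constant: it is equivalent to the Bessel identity $\frac{d}{dz}\{z^{-\nu}J_{\nu}(z)\}=-z^{-\nu}J_{\nu+1}(z)$ under the $e^{-2\pi i\langle\xi,x\rangle}$ convention, i.e.\ the same Bessel-function input that \lemref{lemD2} packages in a different recurrence; with that reference in place your argument is complete.
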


The result establishes a new summation formula with nodes $\pm \sqrt{n}$ and with weights involving the sum-of-five-squares function $r_5(n)$. We observe that the right hand side of \eqref{eqA1.8} involves not only the values of $\psi$, but also those of its derivative $\psi'$, at the points $\sqrt{n}$ $(n=1,2,3,\dots)$. This is not the case in Guinand's formula \eqref{eqA1.3}.

\thmref{thmA1} can be equivalently stated by saying that the Fourier transform of the  distribution  $\sig_5$ is given by
\begin{equation}
  \label{eqA1.6}
  \ft{\sig}_5 = - \frac{i}{3 \pi }  \delta'''_0 - \frac{i}{2 \pi}
  \sum_{n=1}^{\infty} \frac{r_5(n)}{n^{3/2}} \Big[
   (\delta_{\sqrt{n}} - \delta_{-\sqrt{n}})  +
   \sqrt{n} \, (\delta'_{\sqrt{n}} + \delta'_{-\sqrt{n}}) \Big].
\end{equation}
As a consequence we obtain that $\ft{\sig}_5$ is also supported on the set of points $\pm \sqrt{n}$.

\subsection{}
\thmref{thmA1}  is a special case of the next result, which yields a different summation formula for each $k = 3,5,7,9,\dots$.
To state the result, we define the numbers
  \begin{equation}
    \label{eqA1.11}
    \alpha_k :=  \frac{(-1)^{(k-3)/2}}{(k-2)!!} \Big(\frac1{2\pi}\Big)^{(k-3)/2}, \quad
    \beta_{j,k} :=  \frac{(-1)^j \, (k-j-3)!}{j! (k-2j-3)!!} \Big(\frac1{2\pi}\Big)^{(k-3)/2}
  \end{equation}
where $k \geq 3$ is an odd integer, and $0 \leq j \leq (k-3)/2$.

\begin{thm}
  \label{thmA3}
  Let $\varphi$ be an odd Schwartz function on $\R$, and let $\psi = \ft{\varphi}$ be the Fourier transform of $\varphi$. For each odd integer $k \geq 3$ we have
\[
    \varphi'(0) +  \sum_{n=1}^{\infty} \frac{r_k(n)}{\sqrt{n}} \varphi(\sqrt{n})
    =  i \alpha_k \, \psi^{(k-2)}(0) +
   i \sum_{n=1}^{\infty}
    \frac{r_k(n)}{n^{(k-2)/2}}
    \Big[  \sum_{j=0}^{(k-3)/2}
      \beta_{j,k} \, n^{j/2} \,  \psi^{(j)}(\sqrt{n}) \Big],
\]
  where the coefficients $\alpha_k$ and $\beta_{j,k}$ are given by \eqref{eqA1.11}.
\end{thm}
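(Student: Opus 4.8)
The plan is to realize the left-hand side as a Poisson summation over the lattice $\Z^k$ applied to a radial Schwartz function, and then to compute the Fourier transform of that function by descent in the dimension. Since $\varphi$ is an odd Schwartz function, the profile $g(r) := \varphi(r)/r$ extends to an even Schwartz function on $\R$ (the singularity at the origin is removable because $\varphi(0)=0$), and hence $G(x) := g(|x|)$ is a radial Schwartz function on $\R^k$. Grouping the lattice points $m \in \Z^k$ according to $|m|^2 = n$ and using $G(0)=g(0)=\varphi'(0)$, one gets
\[
  \sum_{m \in \Z^k} G(m) = \varphi'(0) + \sum_{n=1}^{\infty} \frac{r_k(n)}{\sqrt n}\, \varphi(\sqrt n),
\]
which is exactly the left-hand side of the asserted formula. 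Applying the classical Poisson summation formula in $\R^k$ reduces everything to evaluating $\sum_{m\in\Z^k}\hat G(m)$, i.e.\ to identifying the radial profile of $\hat G$.

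Next I would compute that profile by the standard dimension-shifting identity for radial Fourier transforms: if a radial function on $\R^d$ has transform profile $Q$, then in $\R^{d+2}$ the corresponding profile is $-\tfrac{1}{2\pi\rho}\tfrac{d}{d\rho}Q$. Iterating $(k-1)/2$ times from $d=1$, where the one-dimensional transform of the even function $g$ is $h := \hat g$, yields
\[
  \Phi(\rho) := \left(-\frac{1}{2\pi\rho}\frac{d}{d\rho}\right)^{(k-1)/2} h(\rho)
\]
as the radial profile of $\hat G$; this is where oddness of $k$ is used, the half-integer Bessel kernels being elementary. The two transforms are linked by $h' = -2\pi i\,\psi$: indeed $\varphi(r)=r\,g(r)$ gives $\psi = \ft\varphi = \tfrac{i}{2\pi}h'$, so that $h^{(\ell)} = -2\pi i\,\psi^{(\ell-1)}$ for every $\ell \ge 1$. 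Since $(k-1)/2 \ge 1$, at least one derivative is always taken, and $\Phi$ is expressed entirely through $\psi,\psi',\dots$, never through $h$ itself.

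The main computational step is to expand the iterated operator. Writing
\[
  \left(\frac{1}{\rho}\frac{d}{d\rho}\right)^{m} = \sum_{j=1}^{m} A_{m,j}\, \rho^{\,j-2m}\,\frac{d^{j}}{d\rho^{j}}, \qquad m = \frac{k-1}{2},
\]
with coefficients $A_{m,j}$ determined by an easy induction on $m$, substituting $h^{(j)} = -2\pi i\,\psi^{(j-1)}$ and reindexing $j \mapsto j+1$ produces
\[
  \Phi(\rho) = i \sum_{j=0}^{(k-3)/2} \beta_{j,k}\, \rho^{\,j-(k-2)}\, \psi^{(j)}(\rho),
\]
and the main obstacle is to verify that the resulting scalar coefficients are precisely the $\beta_{j,k}$ of \eqref{eqA1.11}; this is a bookkeeping check matching $A_{m,j+1}$ against the closed form, carried out by induction on $k$. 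Evaluating $\Phi$ at $\rho=\sqrt n$ and multiplying by $r_k(n)$ reproduces, after writing $\rho^{\,j-(k-2)} = n^{j/2}/n^{(k-2)/2}$, the infinite sum on the right-hand side of the theorem.

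It remains to compute the origin term $\Phi(0)$, which is the cleanest part once one substitutes $u = \rho^2/2$, since then $\tfrac{1}{\rho}\tfrac{d}{d\rho} = \tfrac{d}{du}$. As $h$ is even and smooth, $H(u):=h(\sqrt{2u})$ is smooth at $u=0$ with $H^{(m)}(0) = \tfrac{2^{m}m!}{(2m)!}\,h^{(2m)}(0) = \tfrac{1}{(k-2)!!}\,h^{(k-1)}(0)$, whence
\[
  \Phi(0) = \Big(-\tfrac{1}{2\pi}\Big)^{m} H^{(m)}(0) = \Big(-\tfrac{1}{2\pi}\Big)^{(k-1)/2}\frac{h^{(k-1)}(0)}{(k-2)!!}.
\]
Inserting $h^{(k-1)}(0) = -2\pi i\,\psi^{(k-2)}(0)$ and simplifying the powers of $2\pi$ and the sign gives exactly $\Phi(0) = i\,\alpha_k\,\psi^{(k-2)}(0)$ with $\alpha_k$ as in \eqref{eqA1.11}. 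Collecting $\Phi(0)$ together with the series from the previous step yields the right-hand side, completing the proof. Throughout, the interchange of summation with the differential operations is justified by the Schwartz decay of $G$ and $\hat G$.
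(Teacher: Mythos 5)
Your proposal is correct, and its skeleton coincides with the paper's: both realize the left-hand side as $\sum_{m \in \Z^k} G(m)$ for the radial Schwartz function $G(x) = \varphi(|x|)/|x|$ (whose profile is Schwartz by the Hadamard-type \lemref{lemC2} and \lemref{lemC3}), apply $k$-dimensional Poisson summation, and identify $\ft{G}$ via an explicit odd-dimensional radial Fourier transform formula. The differences lie in how the two key ingredients are obtained. First, the paper does not invoke Poisson summation for the lattice directly: it proves the general \thmref{thmA5} and its extension \thmref{thmA8}, valid for \emph{any} measure with locally finite support whose distributional Fourier transform is again such a measure, and only then specializes to $\mu = \sum_{m \in \Z^k} \delta_m$. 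Your direct route is shorter and self-contained, but yields only \thmref{thmA3}, whereas the paper's detour also delivers \corref{corA6} and the quasicrystal-type statement of \thmref{thmA8}. Second, the radial transform formula (\thmref{thmD1}) is derived differently: the paper uses spherical integration plus the three-term Bessel recurrence $2\nu z^{-1}J_{\nu} = J_{\nu-1}+J_{\nu+1}$, producing a recurrence that steps the dimension down by $2$ and $4$ with base cases $k=1,3$; you instead iterate the first-order dimension-raising identity $Q_{d+2} = -\tfrac{1}{2\pi\rho}Q_d'$ (equivalent to the derivative identity $\tfrac{d}{dz}\bigl(z^{-\nu}J_{\nu}(z)\bigr) = -z^{-\nu}J_{\nu+1}(z)$) starting from $d=1$, and expand the operator $\bigl(-\tfrac{1}{2\pi\rho}\tfrac{d}{d\rho}\bigr)^{(k-1)/2}$; the required matching of the expansion coefficients $A_{m,j+1}$ with $\beta_{j,k}$ checks out, including the inductive step $A_{m+1,j} = A_{m,j-1}+(j-2m)A_{m,j}$ against the closed form in \eqref{eqA1.11}. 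Third, for the value at the origin the paper computes $\ft{G}(0)$ from the total surface area of the unit sphere (\remref{remD1}), while your substitution $u = \rho^2/2$, turning $\tfrac{1}{\rho}\tfrac{d}{d\rho}$ into $\tfrac{d}{du}$, is a clean alternative giving the same value $i\alpha_k\,\psi^{(k-2)}(0)$. Both routes are sound; yours trades generality for brevity, with the analytic core (the odd-dimensional radial Fourier formula) established by a related but distinct Bessel identity.
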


We thus obtain, for each $k=3,5,7,9,\dots$, a new Poisson type summation formula with weights involving the sum-of-$k$-squares function $r_k(n)$. The nodes remain at the points $\pm \sqrt{n}$, but the number of derivatives at each node depends on $k$ and it increases with $k$.  We observe that \eqref{eqA1.3} and \eqref{eqA1.8} are the special cases obtained for $k=3$ and $5$.

The result is equivalent to saying that for each odd integer $k \geq 3$, the Fourier transform of the  distribution  $\sig_k$ defined by \eqref{eqA1.1}  is given by
\begin{equation}
  \label{eqA1.7}
  \ft{\sig}_k = 2i \alpha_k   \delta^{(k-2)}_0
  -  i \sum_{n=1}^{\infty} \frac{r_k(n)}{n^{(k-2)/2}}
  \Big[ \sum_{j=0}^{(k-3)/2}
   \beta_{j,k} \, n^{j/2} \,
  \Big( (-1)^j \, \delta^{(j)}_{\sqrt{n}} - \delta^{(j)}_{-\sqrt{n}}\Big)
  \Big].
\end{equation}
In particular, $\sig_k$ is a temperate distribution supported on the locally finite set of points $\pm \sqrt{n}$, and its Fourier transform $\ft{\sig}_k$ is supported on the same set.

\subsection{}
Our proof of the results above is inspired by Meyer's proof in \cite{Mey16}
of Guinand's formula \eqref{eqA1.3}. We will obtain \thmref{thmA3}
as a consequence of the following result, 
which generalizes \cite[Theorem 7]{Mey16}.

\begin{thm}
  \label{thmA5}
  Let $\mu = \sum_{\lam \in \Lam} a(\lam) \delta_\lam$
  be a measure on $\R^k$, $k \in \{3,5,7,9,\dots\}$, whose
  support $\Lam$ is a locally finite set. Suppose that
  $\mu$ is a temperate distribution, and that its Fourier
  transform $\ft{\mu} = \sum_{s \in S} b(s) \delta_s$
  is also a measure, supported on a locally finite set $S$.
  Assume that $0 \notin \Lam$, $0 \notin S$, and let
  $\sig$ be a measure on $\R$  defined by
  \begin{equation}
    \label{eqA5.1}
    \sig := \sum_{\lam \in \Lam} \frac{a(\lam)}{|\lam|}
    ( \delta_{|\lam|} -  \delta_{-|\lam|}).
  \end{equation}
  Then $\sig$ is a temperate distribution whose one-dimensional Fourier transform is
  \begin{equation}
    \label{eqA5.2}
    \ft{\sig} =
    -i \sum_{s \in S} \frac{b(s)}{|s|^{k-2}}
    \Big[ \sum_{j=0}^{(k-3)/2}
      \beta_{j,k} \, |s|^j \,
      \Big( (-1)^j \, \delta^{(j)}_{|s|} - \delta^{(j)}_{-|s|}\Big)
      \Big],
  \end{equation}
  where the coefficients $\beta_{j,k}$ are defined in \eqref{eqA1.11}.
\end{thm}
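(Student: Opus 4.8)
The plan is to test the odd distribution $\sig$ against an arbitrary odd Schwartz function $\varphi$ on $\R$ and to transport every pairing to the ambient space $\R^k$ by radialization. Since $\varphi$ is odd, the function $r \mapsto 2\varphi(r)/r$ extends to an even Schwartz function, so its radial lift $F(x) := 2\varphi(|x|)/|x|$ is a genuine Schwartz function on $\R^k$; because $0 \notin \Lam$ there is no issue at the origin. Unwinding \eqref{eqA5.1} and using oddness gives
\[
  \langle \sig, \varphi\rangle = \sum_{\lam\in\Lam}\frac{a(\lam)}{|\lam|}\big(\varphi(|\lam|) - \varphi(-|\lam|)\big) = \sum_{\lam\in\Lam} a(\lam)\, F(\lam) = \langle \mu, F\rangle .
\]
Since $\varphi \mapsto F$ is continuous from the space of odd Schwartz functions into the Schwartz space of $\R^k$ and $\mu$ is temperate, this identity simultaneously shows that $\sig$ is a temperate distribution and reduces everything to the single measure $\mu$.

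Next I would compute the Fourier transform by duality. Writing $\psi = \ft{\varphi}$ (which is again odd), I have $\langle \ft{\sig}, \varphi\rangle = \langle \sig, \psi\rangle = \langle \mu, G\rangle$, where $G(x) := 2\psi(|x|)/|x|$ is the radial Schwartz lift of $\psi$. To bring in $\ft{\mu}$, note that the $k$-dimensional Fourier transform is an involution on radial functions, so with $H := \mathcal{F}_k[G]$ one has $G = \mathcal{F}_k[H]$ and hence
\[
  \langle \mu, G\rangle = \langle \ft{\mu}, H\rangle = \sum_{s\in S} b(s)\, H(s) = \sum_{s\in S} b(s)\, h_0(|s|),
\]
where $h_0$ is the radial profile of $H$; the assumption $0 \notin S$ guarantees that the negative powers of $|s|$ that will appear cause no trouble. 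On the other hand, pairing the claimed expression \eqref{eqA5.2} against $\varphi$, using $\langle \delta^{(j)}_a, \varphi\rangle = (-1)^j \varphi^{(j)}(a)$ together with the parity relation $\varphi^{(j)}(-a) = (-1)^{j+1}\varphi^{(j)}(a)$, collapses each bracket to $2\varphi^{(j)}(|s|)$ and yields $\langle \ft{\sig}, \varphi\rangle = -2i\sum_{s} b(s)\sum_{j} \beta_{j,k}\,|s|^{j-(k-2)}\varphi^{(j)}(|s|)$. Comparing the two displays, the theorem reduces to the one-variable identity
\[
  h_0(\rho) = \mathcal{F}_k[G](\rho) = -2i\sum_{j=0}^{(k-3)/2}\beta_{j,k}\,\rho^{\,j-(k-2)}\,\varphi^{(j)}(\rho), \qquad \rho > 0 .
\]

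The heart of the argument is this radial Fourier computation, which I would carry out by a dimension walk. For the base case $k=3$ the three-dimensional radial transform formula gives $\mathcal{F}_3[G](\rho) = (4/\rho)\int_0^\infty \psi(r)\sin(2\pi r\rho)\,dr$, and evaluating the sine integral via $\ft{\psi} = \ft{\ft{\varphi}} = \varphi(-\,\cdot\,) = -\varphi$ produces exactly $-2i\varphi(\rho)/\rho$, matching the right-hand side since $\beta_{0,3}=1$. For the inductive step I would invoke the classical dimension-raising relation $\mathcal{F}_{k+2} = -\tfrac{1}{2\pi\rho}\tfrac{d}{d\rho}\,\mathcal{F}_{k}$ for radial transforms. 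Applying $-\tfrac{1}{2\pi\rho}\tfrac{d}{d\rho}$ to a term $\rho^{\,j-(k-2)}\varphi^{(j)}(\rho)$ produces one term with the same $j$ and power $\rho^{\,j-k}$ and one term with $j$ replaced by $j+1$ and power $\rho^{\,(j+1)-k}$, exactly the shape required for the exponent $k+2$; this leads to the two-term recursion $\beta_{j,k+2} = -\tfrac{1}{2\pi}\big[(j-k+2)\beta_{j,k} + \beta_{j-1,k}\big]$, which together with the base case is verified by induction to be solved by the closed form \eqref{eqA1.11}.

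The main obstacle I anticipate is making the dimension walk rigorous rather than formal: justifying differentiation under the integral sign and the convergence of the radial (Hankel) integrals at both endpoints, and checking that each application of $-\tfrac{1}{2\pi\rho}\tfrac{d}{d\rho}$ preserves the Schwartz class, so that all pairings with the temperate measures $\mu$ and $\ft{\mu}$ remain legitimate. The remaining difficulty is purely bookkeeping: confirming that the elementary recursion for the $\beta_{j,k}$ is indeed solved by \eqref{eqA1.11}, with the correct behaviour of the double factorials at the boundary index $j=(k-3)/2$. These steps are routine but must be carried out with care.
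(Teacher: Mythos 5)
Your argument is correct in substance and shares the paper's overall architecture: both proofs radialize odd one-dimensional data to Schwartz functions on $\R^k$ (your extension of $2\varphi(r)/r$ to an even Schwartz function is the paper's Hadamard-type \lemref{lemC2} combined with \lemref{lemC3}), both establish temperateness and the Fourier identity through the single pairing $\dotprod{\sig}{\varphi}=\dotprod{\mu}{F}$ and the involutivity of $\mathcal{F}_k$ on radial functions, and both reduce the theorem to an explicit formula for the $k$-dimensional Fourier transform of a radial function in odd dimensions (the paper's \thmref{thmD1}, which in your notation is exactly the identity $\mathcal{F}_k[G](\rho)=-2i\sum_j\beta_{j,k}\,\rho^{\,j-(k-2)}\varphi^{(j)}(\rho)$). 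The genuine difference is how that formula is proved. The paper runs a three-term recurrence: the Bessel identity of \lemref{lemD2} yields $A_kf=\frac{k-4}{2\pi t^2}A_{k-2}f-\frac{1}{t^2}A_{k-4}(t^2f)$, matched against the same recurrence for the explicit operators $B_k$, with base cases $k=1,3$. You instead use the dimension-raising relation $\mathcal{F}_{k+2}=-\frac{1}{2\pi\rho}\frac{d}{d\rho}\mathcal{F}_k$ with the single base case $k=3$, leading to the two-term recursion $\beta_{j,k+2}=-\frac{1}{2\pi}\big[(j-k+2)\beta_{j,k}+\beta_{j-1,k}\big]$, which the closed form \eqref{eqA1.11} does satisfy, including at the boundary indices with the convention $\beta_{-1,k}=\beta_{(k-1)/2,k}=0$. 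This is essentially the route of Grafakos--Teschl \cite{GT13}, which the paper cites as the source of \thmref{thmD1} but deliberately reproves by other means; your version gives a shorter induction, while the paper's is self-contained and, as the authors remark, adapts to even dimensions as well.

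One step does need repair. You test against an \emph{arbitrary} odd Schwartz function $\varphi$ and write $\dotprod{\ft{\mu}}{H}=\sum_{s\in S}b(s)H(s)$, where $H=\mathcal{F}_k[G]$ is Schwartz but not compactly supported. For a complex measure that is temperate only as a distribution, this pointwise sum need not converge absolutely: temperateness can come from cancellation between nearby atoms, with no useful bound on $\sum_{|s|\le R}|b(s)|$ (e.g.\ $\sum_n e^n(\delta_n-\delta_{n+e^{-n^2}})$ is temperate, yet $\sum_s |b(s)H(s)|=\infty$ for generic Schwartz $H$). The paper sidesteps exactly this by verifying the identity only for test functions whose Fourier transform is compactly supported, so that $\ft{G}$ is compactly supported and the sum over the locally finite set $S$ is finite. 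The fix is costless in your framework: since $\ft{\sig}$ and the right-hand side of \eqref{eqA5.2} are both odd distributions, it suffices to test against odd $\varphi\in\mathcal{D}(\R)$; once your radial identity is proved (as a statement about functions, prior to any distributional pairing), it shows that $H$ vanishes for $|\xi|>R$ whenever $\supp\varphi\sbt[-R,R]$, and then every sum in your chain of equalities is finite.
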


The result in \cite[Theorem 7]{Mey16} corresponds to the case  $k=3$ 
in \thmref{thmA5}.

We observe that the measure $\sig$ in \eqref{eqA5.1} is supported on the 
locally finite subset $\{\pm |\lam| : \lam \in \Lam\}$ of the real line.
 It follows from \thmref{thmA5} that the Fourier
transform $\ft{\sig}$ is a distribution supported on the (also locally finite) set
$\{\pm |s| : s \in S\}$.

In \secref{sec:newpoisson} we will give a more general version of \thmref{thmA5}, in which the origin is allowed to belong to the supports $\Lam, S$ of the measures $\mu$ and $\ft{\mu}$ (\thmref{thmA8}).

\subsection{}
As an example, \thmref{thmA5} applies to the measure
$\mu_{(\eta,\xi)} = \sum_{m \in \Z^k} e^{2 \pi i \dotprod{m}{\xi}} \delta_{m + \eta}$ where $\eta$ and $\xi$ are two vectors in $\R^k \setminus \Z^k$.
It follows from Poisson's formula that the Fourier transform of $\mu_{(\eta,\xi)}$ is given by $\ft{\mu}_{(\eta,\xi)} = e^{- 2 \pi i \dotprod{\eta}{\xi}} \mu_{(\xi,-\eta)}$. Hence applying \thmref{thmA5}  to this measure yields
the following generalization of \cite[Theorem 5]{Mey16}.

\begin{corollary}
  \label{corA6}
  Let $\eta$ and $\xi$ be two vectors in $\R^k \setminus \Z^k$,
  $k \in \{3,5,7,9,\dots\}$. Then the   one-dimensional Fourier transform of the measure
  \begin{equation}
    \label{eqA6.1}
    \sig := \sum_{m \in \Z^k} \frac{e^{2 \pi i \dotprod{m}{\xi}}}{|m+\eta|}
    ( \delta_{|m + \eta|} -  \delta_{- |m + \eta|} )
  \end{equation}
  is the distribution
  \begin{equation}
    \label{eqA6.2}
    \ft{\sig} = -i e^{- 2 \pi i \dotprod{\eta}{\xi}}
    \sum_{m \in \Z^k} \frac{e^{- 2 \pi i \dotprod{m}{\eta}}}{|m + \xi|^{k-2}}
    \Big[ \sum_{j=0}^{(k-3)/2}
      \beta_{j,k} \, |m + \xi|^j \,
      \Big( (-1)^j \, \delta^{(j)}_{|m + \xi|} - \delta^{(j)}_{-|m + \xi|}\Big)
      \Big].
  \end{equation}
\end{corollary}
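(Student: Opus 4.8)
The plan is to deduce \corref{corA6} directly from \thmref{thmA5}, taking $\mu$ to be the measure $\mu_{(\eta,\xi)} = \sum_{m\in\Z^k} e^{2\pi i\dotprod{m}{\xi}}\delta_{m+\eta}$ on $\R^k$. First I would check that $\mu_{(\eta,\xi)}$ meets the hypotheses of \thmref{thmA5}. Its support is the lattice translate $\Lam = \eta + \Z^k$, which is uniformly discrete and hence locally finite; the coefficients $a(\lam) = e^{2\pi i\dotprod{m}{\xi}}$ (for $\lam = m+\eta$) are unimodular, so the number of nodes in any ball of radius $R$ grows like $R^k$ and $\mu_{(\eta,\xi)}$ is a temperate distribution. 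Moreover $0\notin\Lam$, since $\eta\notin\Z^k$ rules out $m+\eta=0$ for $m\in\Z^k$.

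Next I would identify the Fourier transform. As recalled before the statement, the classical $k$-dimensional Poisson summation formula, combined with the distributional rules converting translation into modulation and modulation into translation, gives $\ft{\mu}_{(\eta,\xi)} = e^{-2\pi i\dotprod{\eta}{\xi}}\mu_{(\xi,-\eta)}$. Concretely, writing the Dirac comb as $\nu = \sum_m \delta_m$ with $\ft{\nu}=\nu$, one obtains $\mu_{(\eta,\xi)}$ from $\nu$ by first modulating by $e^{2\pi i\dotprod{\xi}{\cdot}}$ and then translating by $\eta$; transporting these two operations through the Fourier transform produces a translation by $\xi$ followed by a modulation by $e^{-2\pi i\dotprod{\eta}{\cdot}}$, and evaluating the latter at the node $m+\xi$ yields the coefficient $e^{-2\pi i\dotprod{\eta}{\xi}}e^{-2\pi i\dotprod{m}{\eta}}$. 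Thus $\ft{\mu}_{(\eta,\xi)}$ is a measure supported on $S = \xi + \Z^k$ with $b(s) = e^{-2\pi i\dotprod{\eta}{\xi}}e^{-2\pi i\dotprod{m}{\eta}}$ for $s = m+\xi$; this set is again locally finite and, because $\xi\notin\Z^k$, satisfies $0\notin S$. Hence all assumptions of \thmref{thmA5} hold.

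Finally, I would substitute these data into \thmref{thmA5}. With the above $\Lam$ and $a(\lam)$, the measure $\sig$ of \eqref{eqA5.1} is exactly \eqref{eqA6.1}, and the conclusion \eqref{eqA5.2} reads
\[
\ft{\sig} = -i\sum_{s\in S}\frac{b(s)}{|s|^{k-2}}\Big[\sum_{j=0}^{(k-3)/2}\beta_{j,k}\,|s|^{j}\Big((-1)^j\,\delta^{(j)}_{|s|}-\delta^{(j)}_{-|s|}\Big)\Big].
\]
Inserting $s = m+\xi$ together with the value of $b(s)$, and pulling the constant $e^{-2\pi i\dotprod{\eta}{\xi}}$ out of the sum, transforms this into \eqref{eqA6.2}, completing the argument.

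Since the analytic content is entirely carried by \thmref{thmA5}, I do not expect a serious obstacle. The one place demanding care is the bookkeeping in the Poisson-summation computation of $\ft{\mu}_{(\eta,\xi)}$: keeping straight which of $\eta,\xi$ becomes a translation and which becomes a modulation after the Fourier transform, and confirming the polynomial growth and local finiteness that make both $\mu_{(\eta,\xi)}$ and its transform the temperate pure point measures required by the theorem.
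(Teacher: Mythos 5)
Your proposal is correct and follows essentially the same route as the paper: apply \thmref{thmA5} to $\mu_{(\eta,\xi)}$, use Poisson's summation formula to get $\ft{\mu}_{(\eta,\xi)} = e^{-2\pi i \dotprod{\eta}{\xi}}\mu_{(\xi,-\eta)}$, note that $\eta,\xi \notin \Z^k$ keeps the origin out of both supports, and substitute into \eqref{eqA5.2}. Your version merely spells out the verification of temperateness and the coefficient bookkeeping in more detail than the paper does.
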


\subsection{}
In order to prove \thmref{thmA3}, we will give in 
\secref{sec:newpoisson} a more general version of \thmref{thmA5}
(\thmref{thmA8}), in which the supports $\Lam, S$ of the measures 
$\mu$ and $\ft{\mu}$ are allowed to contain the origin.
\thmref{thmA3} will then
be deduced by applying this result to the Poisson measure
$\mu = \sum_{m \in \Z^k} \delta_m$ which satisfies
$\ft{\mu} = \mu$.

We observe that (in the same spirit
as Meyer's proof of Guinand's formula) there exist 
distributional limits for \eqref{eqA6.1} and \eqref{eqA6.2} as
$\eta$ and $\xi$ tend to zero, and these
limits coincide with \eqref{eqA1.1} and \eqref{eqA1.7} respectively.

The rest of the paper is devoted to the proofs of the results stated above.


\section{Preliminaries}

In this section we briefly recall some preliminary background
in the theory of Schwartz distributions
(see \cite{Rud91} for more details)
 and obtain a few basic results that will be
used later on.

\subsection{}
We denote by $\mathcal{D}(\R^k)$ the space of all infinitely smooth, compactly 
supported functions on $\R^k$. A sequence of functions $\varphi_j$ in 
$\mathcal{D}(\R^k)$ is said to converge in the space   $\mathcal{D}(\R^k)$ if (i) there
exists a compact set $K \sbt \R^k$ such that $\supp(\varphi_j) \subset K$ for all $j$;
and (ii) for each multi-index $m = (m_1,\dots,m_k)$,
the sequence of partial derivatives $\partial^m \varphi_j$ converges uniformly on $K$
 as $j \to \infty$. A \emph{distribution} is a linear functional on $\mathcal{D}(\R^k)$
which is continuous with respect to the convergence in the space $\mathcal{D}(\R^k)$.

We denote by $\dotprod{\sig}{\varphi}$ the action of a 
distribution  $\sig$  on  a function $\varphi$ belonging to $\mathcal{D}(\R^k)$.

The \emph{Schwartz space} $\mathcal{S}(\R^k)$ consists of all infinitely smooth
functions $\varphi$ on $\R^k$ such that for each $n \geq 0$ and 
each multi-index $m = (m_1,\dots,m_k)$, the norm
\[
\|\varphi\|_{n,m} := \sup_{x\in \R^k} |x|^n |\partial^m \varphi(x)|
\]
is finite. A sequence $\varphi_j$ is said to converge in the space 
$\mathcal{S}(\R^k)$ if it converges with respect to each one
of the norms $\| \cdot \|_{n,m}$.
A \emph{temperate distribution} is a distribution that can be extended
to a linear functional on the space $\mathcal{S}(\R^k)$ in such a 
way that the extension is continuous with respect to the convergence in the space 
$\mathcal{S}(\R^k)$.

If $\sig$   is a temperate distribution then the
notation $\dotprod{\sig}{\varphi}$ can be extended
to denote the action of $\sig$ on any   
function $\varphi$ from the Schwartz space $\mathcal{S}(\R^k)$.

If $\varphi$ is a Schwartz function on $\R^k$ then its Fourier transform
is defined by
\[
\ft \varphi (\xi)=\int_{\R^k} \varphi (x) \, e^{-2\pi i\langle \xi,x\rangle} dx.
\]
The Fourier transform $\ft\sig$ of a temperate distribution $\sig$ is defined by 
$\dotprod{\ft\sig}{\varphi} = \dotprod{\sig}{\ft\varphi}$.

A distribution  $\sig$  on $\R$  is said to be \emph{even}
(respectively \emph{odd}) if we have
$\dotprod{\sig}{\varphi} = 0$ for every odd 
(respectively even)  function $\varphi \in \mathcal{D}(\R)$.

\subsection{}
The following result is a version of the well-known Hadamard Lemma,
which states that if $f$ is a function of the class $C^r(\R)$
 (that is, $f$ is  $r$-times continuously differentiable)
such that $f(0)=0$, then $f(t)=t g(t)$
where $g$ is a function
of the class $C^{r-1}(\R)$.

\begin{lem}
  \label{lemC2}
  Let $f$ be a function in the Schwartz  space $\mathcal{S}(\R)$,
$f(0)=0$.  Then 
 \begin{equation}
\label{eq:hadam}
f(t) = t g(t), \quad t\in \R,
 \end{equation}
where $g$ is  a function in $\mathcal{S}(\R)$.
Moreover, the mapping which takes $f$ to $g$ is a continuous linear mapping 
from the space of Schwartz  functions vanishing
at the origin into $\mathcal{S}(\R)$.
\end{lem}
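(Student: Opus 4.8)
The plan is to define $g$ explicitly by
\[
g(t) = \int_0^1 f'(st)\, ds
\]
and then to verify that it has all the required properties. The factorization \eqref{eq:hadam} is immediate from the fundamental theorem of calculus: since $f(0)=0$,
\[
f(t) = \int_0^1 \frac{d}{ds} f(st)\, ds = t\int_0^1 f'(st)\, ds = t\, g(t).
\]
Because $f'$ is smooth with all derivatives bounded, one may differentiate under the integral sign to obtain $g\in C^\infty(\R)$ together with the representation $g^{(m)}(t) = \int_0^1 s^m f^{(m+1)}(st)\, ds$. The real content of the lemma is therefore the rapid decay of $g$ and of each of its derivatives $g^{(m)}$, together with the continuity of the assignment $f\mapsto g$.

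To estimate the Schwartz seminorms $\|g\|_{n,m} = \sup_t |t|^n |g^{(m)}(t)|$, I would split the line into the two regimes $|t|\le 1$ and $|t|\ge 1$, using a different representation of $g$ in each. On $|t|\le 1$ the integral formula is the right tool: bounding the integrand gives $|g^{(m)}(t)|\le \|f\|_{0,m+1}$ while $|t|^n\le 1$, so the contribution of this bounded region to $\|g\|_{n,m}$ is controlled by a single seminorm of $f$. This is also the step that guarantees good behavior across the origin. On $|t|\ge 1$, where the factor $1/t$ is harmless, I would instead exploit the algebraic identity $f = t\,g$: Leibniz's rule (the factor $t$ contributes only through its first derivative) yields the recursion $t\,g^{(m)}(t) = f^{(m)}(t) - m\,g^{(m-1)}(t)$.

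An induction on $m$ then shows that on $\{|t|\ge 1\}$ each quantity $|t|^n|g^{(m)}(t)|$ is dominated by a finite sum of seminorms of $f$: the base case is $|t|^n|g(t)| = |t|^{n-1}|f(t)|\le \|f\|_{n,0}$, and the inductive step divides the recursion by $t$ and uses $|t|^{n-1}\le |t|^n$ for $|t|\ge 1$ to pass from $m-1$ to $m$. Combining the two regimes shows that every seminorm $\|g\|_{n,m}$ is finite, so that $g\in\mathcal{S}(\R)$; and since all of these bounds are linear in $f$ — each $\|g\|_{n,m}$ being majorized by an explicit finite combination of the $\|f\|_{n',m'}$ — the same estimates yield at once the continuity of the linear map $f\mapsto g$. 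I do not expect a genuine obstacle here: the only point requiring care is the coordination of the two regimes, since it is the integral formula that produces smoothness at $t=0$ while it is the recursion that produces the decay as $|t|\to\infty$.
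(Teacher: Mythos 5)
Your proof is correct, and while it starts from the same Hadamard-type construction $g(t)=\int_0^1 f'(st)\,ds$ as the paper, it diverges in how the Schwartz bounds and the continuity are established. The paper is deliberately soft at both points: it simply asserts that the identity $g(t)=f(t)/t$ for $t\neq 0$ makes every seminorm $\|g\|_{n,m}$ finite (leaving the estimate to the reader), and it obtains continuity of $f\mapsto g$ not from any estimate but from the closed graph theorem for Fr\'echet spaces — if $f_j\to f$ in $\mathcal{S}(\R)$ then $g_j\to g$ pointwise, and pointwise convergence is enough to verify closedness of the graph. Your argument instead does the quantitative work the paper skips: the two-regime splitting, with the integral representation $g^{(m)}(t)=\int_0^1 s^m f^{(m+1)}(st)\,ds$ controlling $|t|\le 1$, and the Leibniz recursion $t\,g^{(m)}(t)=f^{(m)}(t)-m\,g^{(m-1)}(t)$ driving an induction on $m$ for $|t|\ge 1$, yields an explicit majorization of each $\|g\|_{n,m}$ by finitely many seminorms of $f$. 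This gives membership in $\mathcal{S}(\R)$ and continuity in one stroke, since continuity of a linear map between these spaces is exactly such a seminorm bound. What each approach buys: yours is self-contained and quantitative, avoiding any appeal to Banach--Fr\'echet machinery (and implicitly to the fact that the subspace $\{f\in\mathcal{S}(\R): f(0)=0\}$ is closed, which the paper's closed-graph argument needs); the paper's is shorter and cleaner to read, at the cost of hiding the estimates and invoking a nontrivial theorem. Both are complete proofs.
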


\begin{proof}
Define
$g(t) := \int_{0}^1 f'(tu)  du$, 
 then $g$ is an infinitely smooth function
and satisfies condition \eqref{eq:hadam}. The fact that $g(t) = f(t)/t$ $(t \neq 0)$ implies that $\| g \|_{n,m}$ is finite for every $n$ and $m$, and thus $g \in \mathcal{S}(\R)$.
The mapping which takes $f$ to $g$ is thus a linear mapping
from the space of Schwartz  functions vanishing
at the origin into $\mathcal{S}(\R)$.
Now suppose that $f_j \to f$ in $\mathcal{S}(\R)$. If we denote by $g_j$ and $g$
the functions corresponding to $f_j$ and $f$ respectively by the mapping, then $g_j \to g$ pointwise. The continuity of the mapping thus follows from the closed graph theorem (see \cite[Theorem 2.15]{Rud91}).
\end{proof}

\subsection{}

\begin{lem}
  \label{lemC3}
  Let $f$ be an even function in the Schwartz space $\mathcal{S}(\mathbb{R})$,
and let $F$ be a radial function on $\R^k$ defined by $F(x) := f(|x|)$,
$x\in \R^k$. Then $F$ belongs to the Schwartz space $\mathcal{S}(\R^k)$.
Moreover, the mapping which takes $f$ to $F$ is a continuous linear mapping 
from the space of even functions in $\mathcal{S}(\R)$ into $\mathcal{S}(\R^k)$.
\end{lem}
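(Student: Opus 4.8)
The plan is to prove the three assertions in turn: that $F$ is smooth, that it belongs to $\mathcal{S}(\R^k)$, and that the map $f \mapsto F$ is continuous. Away from the origin smoothness of $F$ is immediate, since there $|x|$ is a smooth function of $x$ and $f$ is smooth; the whole difficulty is concentrated at the origin, and this is exactly where the evenness of $f$ must be used. (For a non-even $f$ the radial function $f(|x|)$ need not be smooth at $0$: already $f'(0)\neq 0$ destroys it, as with $f(t)=|t|$.)

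To handle the origin I would first peel off even powers by means of the Hadamard lemma. Since $f$ is even, $f-f(0)$ vanishes at the origin, so \lemref{lemC2} gives $f(t)-f(0)=t\,p(t)$ with $p\in\mathcal{S}(\R)$; comparing parities forces $p$ to be odd, hence $p(0)=0$, and a second application of \lemref{lemC2} yields $f(t)=f(0)+t^2 q(t)$ with $q\in\mathcal{S}(\R)$ even. Iterating this $N$ times produces, for every $N$, a decomposition
\[
f(t)=Q_N(t^2)+t^{2N}g_N(t),
\]
where $Q_N$ is a polynomial and $g_N\in\mathcal{S}(\R)$. Consequently
\[
F(x)=Q_N(|x|^2)+|x|^{2N}g_N(|x|),
\]
in which $Q_N(|x|^2)=Q_N(x_1^2+\dots+x_k^2)$ is an honest polynomial in $x$, hence smooth, and the essential point is that the remainder $R_N(x):=|x|^{2N}g_N(|x|)$ carries the factor $|x|^{2N}=(x_1^2+\dots+x_k^2)^N$, which vanishes to high even order at the origin.

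The main step is then to estimate $\partial^m R_N$ near the origin. For $x\neq 0$ the chain rule expresses $\partial^m[g_N(|x|)]$ as a sum $\sum_{l\geq 1} g_N^{(l)}(|x|)\,b_{m,l}(x)$, where each $b_{m,l}$ is smooth on $\R^k\setminus\{0\}$ and homogeneous of degree $l-|m|$, so that $|b_{m,l}(x)|\leq C|x|^{l-|m|}$; since $g_N$ and all its derivatives are bounded, this gives $\partial^m[g_N(|x|)]=O(|x|^{1-|m|})$. Combining this with Leibniz's rule and the fact that $\partial^{m'}(|x|^{2N})$ is homogeneous of degree $2N-|m'|$, hence $O(|x|^{2N-|m'|})$, I obtain $\partial^m R_N(x)=O(|x|^{2N-|m|})$ as $x\to 0$. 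Hence for every $m$ with $|m|\leq 2N-1$ the derivative $\partial^m R_N$ extends continuously by $0$ to the origin; by the standard criterion that a continuous function whose partial derivatives up to order $L$ (computed off a single point) extend continuously must be of class $C^L$, I conclude $R_N\in C^{2N-1}(\R^k)$, and therefore $F\in C^{2N-1}(\R^k)$. As $N$ is arbitrary, $F\in C^\infty(\R^k)$. I expect this estimate, together with the justification that the extended partials really are the derivatives at the origin, to be the main obstacle.

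It remains to verify the Schwartz estimates and the continuity of the map. For the norms $\|F\|_{n,m}$ I would split the two regions $|x|\leq 1$ and $|x|\geq 1$: on the compact ball $F$ is smooth, so each $\partial^m F$ is bounded there while $|x|^n\leq 1$; on $|x|\geq 1$ the chain-rule expansion $\partial^m F=\sum_{l} f^{(l)}(|x|)\,b_{m,l}(x)$ has bounded coefficients (indeed $|b_{m,l}(x)|\leq C|x|^{l-|m|}\leq C$ since $l\leq|m|$ and $|x|\geq1$), and the rapid decay of every $f^{(l)}$ beats the polynomial factor $|x|^n$. This shows $\|F\|_{n,m}<\infty$ for all $n,m$, so $F\in\mathcal{S}(\R^k)$. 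Finally, the map $f\mapsto F$ is clearly linear and is defined on the closed, hence Fréchet, subspace of even functions in $\mathcal{S}(\R)$; to obtain continuity I would argue exactly as in \lemref{lemC2} via the closed graph theorem. If $f_j\to f$ in $\mathcal{S}(\R)$ and $F_j\to\Phi$ in $\mathcal{S}(\R^k)$, then convergence in each space implies pointwise convergence, so $\Phi(x)=\lim_j f_j(|x|)=f(|x|)=F(x)$; thus the graph is closed and continuity follows from \cite[Theorem 2.15]{Rud91}.
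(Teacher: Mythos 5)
Your strategy is genuinely different from the paper's and can be made to work, but as written it contains a real error at the very first step. You apply \lemref{lemC2} to $f-f(0)$, claiming that $f(t)-f(0)=t\,p(t)$ with $p\in\mathcal{S}(\R)$. However, \lemref{lemC2} requires its input to be a \emph{Schwartz} function vanishing at the origin, and $f-f(0)$ is not Schwartz unless $f(0)=0$: it tends to $-f(0)$ at infinity. Worse, the conclusion you draw is simply false in general: for the Gaussian $f(t)=e^{-\pi t^2}$ one gets $p(t)=(e^{-\pi t^2}-1)/t$, which decays only like $1/t$ and so is not in $\mathcal{S}(\R)$. The same defect propagates through your iteration, so the decomposition $f(t)=Q_N(t^2)+t^{2N}g_N(t)$ with $g_N\in\mathcal{S}(\R)$ is not available, and the key phrase ``since $g_N$ and all its derivatives are bounded'' is not justified by anything you have established.

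The damage is local, and so is the cure: your estimates on $R_N$ are needed only as $x\to 0$, so all you require of $g_N$ is that it be $C^\infty$ with derivatives bounded \emph{near the origin}. That follows from the classical Hadamard lemma in the smooth category (quoted in the paper just before \lemref{lemC2}: $h\in C^r$, $h(0)=0$ implies $h(t)=t\,\tilde{h}(t)$ with $\tilde{h}\in C^{r-1}$), applied together with the parity bookkeeping you already carry out. With that substitution, the rest of your argument is sound: the homogeneity count $|b_{m,l}(x)|\le C|x|^{l-|m|}$, the bound $\partial^m R_N(x)=O(|x|^{2N-|m|})$, the criterion upgrading continuous extension of the partials to membership in $C^{2N-1}$, the split $|x|\le 1$ versus $|x|\ge 1$ for the Schwartz norms, and the closed graph argument (identical to the paper's) are all correct. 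For comparison, the paper sidesteps the whole issue by applying \lemref{lemC2} not to $f-f(0)$ but to $f'$, which is odd (hence vanishes at $0$) and genuinely Schwartz: this gives $f'(t)=t g(t)$ with $g\in\mathcal{S}(\R)$ even, whence $\partial F/\partial x_i = g(|x|)\,x_i$, and since $g$ is again an even Schwartz function the same step applies to $G(x):=g(|x|)$, yielding $F\in C^r$ for every $r$ by induction. That route stays inside the Schwartz class throughout and avoids Taylor-type decompositions and homogeneity estimates altogether; yours, once repaired, is more hands-on but also standard (it is essentially Whitney's argument that a smooth even function is a smooth function of $t^2$).
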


\begin{proof}
Since $f$ is an even function we have $f'(0)=0$. Hence by \lemref{lemC2}
there is a function $g \in \mathcal{S}(\R)$ such that
$f'(t) = t g(t)$, $t \in \R$. We claim that 
  \begin{equation}
\label{eq:lemC3.1}
   \frac{\partial F}{\partial x_i}(x) = g(|x|) x_i, \quad x= (x_1, \dots, x_k)
  \end{equation}
for each $1\le i\le k$.
Indeed, this follows from the chain rule for $x \neq 0$,
while for $x=0$ we simply observe that both sides of \eqref{eq:lemC3.1} vanish.
Since the right-hand side of \eqref{eq:lemC3.1} is a continuous function
for each $i$, we obtain that $F$ is a function in the class $C^1(\R^k)$.

Applying the same considerations to the (even) function $g$ in place of $f$, 
we obtain that the function $G(x) := g(|x|)$ is also in the class $C^1(\R^k)$.
This implies that the right-hand side of \eqref{eq:lemC3.1} belongs
to $C^1(\R^k)$ for each $i$, and it follows that $F \in C^2(\R^k)$.
Continuing in this way we obtain that $F \in C^r(\R^k)$ for every
$r$, hence $F$ is infinitely smooth.

It is straightforward to check that
$\| F \|_{n,m}$ is finite for every $n$ and every multi-index $m = (m_1,\dots,m_k)$,
 which implies that $F \in \mathcal{S}(\R^k)$.
The mapping which takes $f$ to $F$ is thus a linear mapping
from the space of even functions 
in $\mathcal{S}(\R)$ into $\mathcal{S}(\R^k)$.

Finally, suppose that $f_j \to f$ in $\mathcal{S}(\R)$. If $F_j$ and
$F$ are the functions  corresponding to $f_j$ and $f$ respectively by the mapping, then 
$F_j \to F$ pointwise. This establishes the continuity of the mapping again
as a consequence of the closed graph theorem.
\end{proof}


\section{Fourier transform of a radial function in odd dimensions}
\label{sec:radial}

\subsection{}
Let $f$ be an even Schwartz function on $\R$, and suppose that we use $f$ to construct a radial function $F_k$ on $\R^k$, defined by $F_k(x) := f(|x|)$. The $k$-dimensional Fourier transform $\ft{F}_k$ of the function $F_k$ is again a radial function, so there is an even function $f_k$ on $\R$ such that $\ft{F}_k(\xi) = f_k(|\xi|)$. It turns out that if the dimension $k$ is an odd integer, then there exists an explicit expression for the new function $f_k$ in terms of the one-dimensional Fourier transform $\ft{f}$ and its derivatives. This expression is  given in the following result:

\begin{thm}
  \label{thmD1}
  Let $f$ be an even Schwartz function on $\R$, and let $ k \geq 3 $ be an odd integer. Define a radial function $F_k$ on $\R^k$ by $F_k(x) := f(|x|)$. Then the $k$-dimensional Fourier transform of $F_k$ is given by
  \begin{equation}
    \label{eqD2}
    \ft{F}_k(\xi) = -\frac{1}{2\pi|\xi|^{k-1}}
    \sum_{j=0}^{(k-3)/2}
    \beta_{j,k} \, |\xi|^{j+1} \, \ft{f}^{\:(j+1)}(|\xi|)  , 
	\quad \xi \in \R^k\setminus\{0\}
  \end{equation}
  where the coefficients $\beta_{j,k}$ are defined as in \eqref{eqA1.11}, and where $\ft{f}$ is the one-dimensional Fourier transform of the function $f$.
\end{thm}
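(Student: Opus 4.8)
The plan is to reduce the $k$-dimensional computation to a one-dimensional one by a dimension-raising recursion, and then to identify the resulting iterated differential operator with the explicit sum in \eqref{eqD2}. Since $f$ is even and Schwartz, the radial function $F_k$ lies in $\mathcal{S}(\R^k)$ by \lemref{lemC3}, so $\ft{F}_k$ is again a Schwartz function; being the Fourier transform of a radial function it is itself radial, and I write $\ft{F}_k(\xi) = f_k(|\xi|)$ for a smooth even profile $f_k$ on $(0,\infty)$. For $k=1$ the function $F_1$ equals $f$ (as $f$ is even), so $f_1 = \ft{f}$, and this will serve as the base of an induction that adds two dimensions at a time.

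The engine of the proof is the recursion
\[
 f_{k+2}(\rho) = -\frac{1}{2\pi\rho}\, f_k'(\rho), \qquad \rho>0.
\]
I would derive this from the standard Bessel representation of the radial Fourier transform,
\[
 f_k(\rho) = 2\pi\, \rho^{-(k-2)/2}\int_0^\infty f(r)\, J_{(k-2)/2}(2\pi\rho r)\, r^{k/2}\,dr,
\]
where $J_\nu$ denotes the Bessel function of the first kind of order $\nu$. Differentiating under the integral sign (legitimate by the Schwartz decay of $f$) and using the classical identity $\tfrac{d}{dz}\bigl(z^{-\nu}J_\nu(z)\bigr) = -z^{-\nu}J_{\nu+1}(z)$ with $z = 2\pi\rho r$ and $\nu = (k-2)/2$, then comparing the outcome with the same representation in dimension $k+2$, gives the recursion. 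As a sanity check one can note that for $k=3$ the identity reduces to the elementary computation $f_3(\rho) = \tfrac{2}{\rho}\int_0^\infty f(r)\, r\sin(2\pi\rho r)\,dr = -\tfrac{1}{2\pi\rho}\,\ft{f}'(\rho)$, which matches \eqref{eqD2} since $\beta_{0,3}=1$.

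Iterating the recursion $(k-1)/2$ times from the base case yields
\[
 f_k(\rho) = \Big(-\frac{1}{2\pi\rho}\frac{d}{d\rho}\Big)^{(k-1)/2} \ft{f}(\rho).
\]
It then remains to expand this iterated operator, for which I would prove by induction on $m$ the closed form
\[
 \Big(\frac{1}{\rho}\frac{d}{d\rho}\Big)^{m} g(\rho)
 = \sum_{j=1}^{m} \frac{(-1)^{m-j}\,(2m-j-1)!}{2^{m-j}\,(m-j)!\,(j-1)!}\,
 \rho^{\,j-2m}\, g^{(j)}(\rho),
\]
the inductive step being a direct application of the product rule. Setting $m=(k-1)/2$, factoring out $(-1/2\pi)^{m}$, and reindexing $j\mapsto j+1$ puts $f_k$ into the form \eqref{eqD2}; the final task is to check that the coefficient produced for $\rho^{\,j-k+1}\ft{f}^{(j)}(\rho)$ equals $-\tfrac{1}{2\pi}\beta_{j-1,k}$. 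The signs and ordinary factorials align automatically once $2m-j-1$ is rewritten as $k-j-2$, and the only genuinely delicate point is the identity $2^{m-j}(m-j)! = (k-2j-1)!!$, which holds because $k-2j-1$ is even (as $k$ is odd) and $(2\ell)!! = 2^{\ell}\ell!$ with $\ell = m-j = (k-2j-1)/2$.

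The main obstacle is organizational rather than conceptual: the Bessel identity does all the analytic work in establishing the recursion, so the real care is needed in the combinatorial identification of the iterated operator with the coefficients $\beta_{j,k}$, in particular in keeping the reindexing $j\mapsto j+1$ and the double-factorial conversion straight throughout.
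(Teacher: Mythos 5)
Your proof is correct, but it takes a genuinely different route from the paper's. You derive \eqref{eqD2} constructively: from the Bessel representation of the radial profile and the identity $\tfrac{d}{dz}\bigl(z^{-\nu}J_\nu(z)\bigr)=-z^{-\nu}J_{\nu+1}(z)$ you obtain the dimension-raising recursion $f_{k+2}(\rho)=-\tfrac{1}{2\pi\rho}f_k'(\rho)$, iterate it down to the base case $f_1=\ft{f}$, and then expand the iterated operator $\bigl(\rho^{-1}d/d\rho\bigr)^m$ combinatorially; your closed form for that operator (whose coefficients satisfy $c_{j,m+1}=(j-2m)c_{j,m}+c_{j-1,m}$, confirming the inductive step), the sign bookkeeping $(-1)^m(-1)^{m-j}=(-1)^j$, and the double-factorial identification $2^{m-j}(m-j)!=(k-2j-1)!!$ all check out, so the coefficients do match $\beta_{j,k}$ from \eqref{eqA1.11}. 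The paper proceeds differently: it uses the three-term Bessel recurrence $2\nu z^{-1}J_\nu(z)=J_{\nu-1}(z)+J_{\nu+1}(z)$ (\lemref{lemD2}) to show that the radial Fourier-transform operators $A_k$ satisfy a recurrence relating dimensions $k$, $k-2$, $k-4$ (see \eqref{eqD4}), and then checks that the explicit operators $B_k$ of \eqref{eqD2_6} satisfy the same recurrence \eqref{eqD3} with the same base cases $k=1,3$. In other words, the paper \emph{verifies} a guessed formula by induction, with the combinatorial work hidden in the omitted check of \eqref{eqD3}, whereas you \emph{derive} the formula, with the combinatorics out in the open in the operator expansion; your version is more self-contained in that \eqref{eqD2} need not be known in advance. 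Be aware, though, that your argument is essentially the original proof of Grafakos and Teschl \cite{GT13}, which the paper explicitly cites and deliberately departs from; the authors chose their recurrence scheme partly because it adapts to even dimensions $k$, although your first-order recursion also makes sense there --- it simply terminates at the two-dimensional Hankel transform rather than at $\ft{f}$, so neither method yields an explicit one-dimensional formula in the even case.
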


This result was obtained in \cite[Corollary 1.2]{GT13}.  In this paper the authors also give an expression for \(\ft{F}_k\) for even dimensions $k$,  but in this case the expression is not ``explicit'', i.e.\ it is not given in terms of the one-dimensional Fourier transform $\ft{f}$. Below we give a proof of \thmref{thmD1} that is different from the one in \cite{GT13} (our method can also be used to establish the corresponding result for even dimensions $k$).

The assumption in \thmref{thmD1} that \(f\) is a Schwartz function is not essential for the conclusion. The result remains valid (with the same proof) e.g.\ for any  \(f\) such that \((1+|t|)^{k-1} f(t)\) is in \(L^1(\R)\), a condition which ensures that \(F_k\) is a function in \(L^1(\R^k)\).

\subsection{}
We now turn to the proof of \thmref{thmD1}. We begin with a simple claim about a recurrence relation satisfied by the Fourier transforms of the surface measures on the unit spheres in \(\R^k\).
\begin{lem}
  \label{lemD2}
  Let $\mu_k$ denote the surface measure on the $(k-1)$-dimensional unit sphere in $\R^k$. The Fourier transform $\ft{\mu}_k$ is a radial function, hence there is an even function $s_k$ satisfying $\ft{\mu}_k(\xi) = s_k(|\xi|)$, $\xi \in \R^k$. Then the
recurrence relation 
\begin{equation}
  \label{eqD2_1}
s_k(t) = (2 \pi t^2)^{-1} ((k-4)s_{k-2}(t) - 2\pi s_{k-4}(t)), \quad t \neq 0,
\end{equation}
is valid for each (even or odd) integer $k \geq 5$.
\end{lem}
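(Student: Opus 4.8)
The plan is to reduce everything to the classical Bessel representation of the spherical surface measure. With the normalization $\ft{\varphi}(\xi)=\int_{\R^k}\varphi(x)e^{-2\pi i\dotprod{\xi}{x}}dx$ used in the paper, one has for $\xi\neq0$
\[
\ft{\mu}_k(\xi)=2\pi\,|\xi|^{-(k-2)/2}\,J_{(k-2)/2}(2\pi|\xi|),
\]
so that $s_k(t)=2\pi\,t^{-(k-2)/2}J_{(k-2)/2}(2\pi t)$ for $t>0$, where $J_\nu$ denotes the Bessel function of the first kind of order $\nu$. I would first recall this representation: since $\mu_k$ is rotation invariant, I may take $\xi=(t,0,\dots,0)$ and slice the sphere by the hyperplanes $\{x_1=\text{const}\}$, which turns $s_k(t)=\int_{S^{k-1}}e^{-2\pi itx_1}\,d\mu_k(x)$ into a constant multiple of $\int_{-1}^1 e^{-2\pi itu}(1-u^2)^{(k-3)/2}\,du$, and this is exactly the stated Bessel integral. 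The normalizing constant is pinned down by letting $t\to0$, where both sides equal the total area $2\pi^{k/2}/\Gamma(k/2)$ of the unit sphere.

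Granting this, the recurrence is a direct consequence of the three-term (contiguous) relation for Bessel functions,
\[
J_{\nu-1}(z)+J_{\nu+1}(z)=\frac{2\nu}{z}\,J_\nu(z),
\]
applied with $z=2\pi t$ and middle order $\nu=(k-4)/2$, so that $\nu-1=(k-6)/2$, $\nu$, and $\nu+1=(k-2)/2$ correspond precisely to the dimensions $k-4$, $k-2$, and $k$. Substituting $J_{(m-2)/2}(2\pi t)=(2\pi)^{-1}t^{(m-2)/2}s_m(t)$ for $m=k-4,k-2,k$ into this relation, multiplying through by $2\pi\,t^{-(k-6)/2}$, and simplifying the powers of $t$ (the $s_{k-2}$ term loses all powers of $t$, while the $s_k$ term picks up a factor $t^2$) gives
\[
t^2 s_k(t)+s_{k-4}(t)=\frac{k-4}{2\pi}\,s_{k-2}(t),
\]
which is exactly the asserted identity after dividing by $t^2$.

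I do not expect a genuine obstacle in the algebra, which is only bookkeeping of powers of $t$; the one point deserving attention is that the recurrence runs down to $s_{k-4}$, and for $k=5$ this is $s_1$, the Fourier transform of the surface measure on $S^0=\{\pm1\}\subset\R$, namely $\delta_1+\delta_{-1}$, with $s_1(t)=2\cos(2\pi t)$. The Bessel representation survives this boundary case because $J_{-1/2}(z)=\sqrt{2/\pi z}\,\cos z$ reproduces $s_1(t)=2\cos(2\pi t)$ exactly; this is precisely why I prefer to argue through the Bessel identity rather than through the integral $\int_{-1}^1 e^{-2\pi itu}(1-u^2)^{(k-3)/2}\,du$, which fails to converge once $k=1$. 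Should one wish to avoid quoting the Bessel recurrence altogether, the same identity can be produced from the integral representation by two integrations by parts, giving the first-order relations $s_{k-2}'(t)=-2\pi t\,s_k(t)$ and $(k-2)s_k(t)+t\,s_k'(t)=2\pi s_{k-2}(t)$; applying the second with $k$ replaced by $k-2$ and eliminating $s_{k-2}'$ by means of the first reproduces the three-term recurrence, at the cost of a separate limiting argument for the case $k-4=1$.
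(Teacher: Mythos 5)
Your proof is correct and takes essentially the same route as the paper: both rest on the classical representation $\ft{\mu}_k(\xi)=2\pi|\xi|^{-(k-2)/2}J_{(k-2)/2}(2\pi|\xi|)$ (which the paper simply cites from Stein--Weiss) combined with the three-term Bessel recurrence $J_{\nu-1}(z)+J_{\nu+1}(z)=\tfrac{2\nu}{z}J_\nu(z)$ applied with $\nu=(k-4)/2$, the substitution being exactly the bookkeeping you carry out and which the paper dismisses as ``straightforward to verify.'' Your explicit treatment of the boundary case $k=5$, where $s_1(t)=2\cos(2\pi t)$ enters through $J_{-1/2}$, is a point the paper passes over in silence and is a worthwhile check.
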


This follows from the recurrence relation satisfied by the Bessel functions:

\begin{proof}[Proof of \lemref{lemD2}]
  It is well-known (see for instance \cite[p.\ 154]{SW71}) that the Fourier transform of the surface measure $\mu_k$ satisfies $\hat{\mu}_k(\xi) = 2 \pi |\xi|^{-(k-2)/2} J_{(k-2)/2}(2 \pi |\xi|)$, where $J_{\nu}$ is the Bessel function of order $\nu$. It is also known that the Bessel functions satisfy the recurrence relation $2\nu z^{-1} J_{\nu}(z) = J_{\nu-1}(z) + J_{\nu+1}(z)$, see \cite[Section 5.3]{Leb72}. For $\nu = (k-4)/2$ this relation yields \eqref{eqD2_1}, as one can verify in a straightforward manner.
\end{proof}

\subsection{}
Equipped with  \lemref{lemD2}, we can now give the proof of \thmref{thmD1}. 

\begin{proof}[Proof of \thmref{thmD1}]
Recall that given an even Schwartz function $f$ on $\R$ we define a radial function $F_k$ on $\R^k$ by $F_k(x) := f(|x|)$. We observe that \(F_k\) is in the Schwartz space, by \lemref{lemC3}.
Since the Fourier transform of a radial function is also radial, there is an even function $f_k$ such that $\ft{F}_k(\xi) = f_k(|\xi|)$, $\xi \in \R^k$. 
 This allows us to define a linear operator \(A_k\) on the space of even  Schwartz functions on $\R$, given by \(A_kf := f_k\).

Using $k$-dimensional spherical integration we have
\begin{equation}
  \label{eqD3_1}
  \ft{F}_k(\xi) = \int_{\mathbb{R}^k} f(|x|) \, e^{-2 \pi i \langle \xi , x \rangle} \, dx = \int_0^\infty f(r) \ft{\mu}_k(r\xi) r^{k-1} \, dr,
\end{equation}
where $\mu_k$ is the surface measure on the $(k-1)$-dimensional unit sphere in $\R^k$. If we let \(s_k\) be the function defined in \lemref{lemD2}, then this yields the formula
\begin{equation}
  \label{eqD1_5}
A_kf(t) = \int_0^\infty f(r) s_k(rt) r^{k-1} \, dr.
\end{equation}
We note that this formula is well-known and can be found e.g.\ in \cite[p.\,155]{SW71}.

Next, we define another family of operators $B_k$ $(k=1,3,5,7,\dots)$ on the space of even  Schwartz functions on $\R$. We let  $B_1f(t):=\hat{f}(t)$; while for $k=3,5,7,\dots$ we define 
  \begin{equation}
    \label{eqD2_6}
    B_k f(t) := -\frac{1}{2\pi t^{k-1}}
    \sum_{j=0}^{(k-3)/2}
    \beta_{j,k} \, t^{j+1} \, \ft{f}^{\:(j+1)}(t), \quad t \neq 0
  \end{equation}
(we do not specify the value of $B_k f$ at the point $t=0$).
The assertion in \thmref{thmD1} may thus be reformulated by saying that $A_k f(t) = B_k f(t)$, $t \neq 0$, for every even function $f$ in $\mathcal{S}(\R)$.
A straightforward calculation whose details are omitted shows that the operators $\{B_k\}$, $k=1,3,5,7,\dots$, satisfy the recurrence relation
\begin{equation}
  \label{eqD3}
  B_kf(t) = \frac{k-4}{2 \pi t^2}  \, B_{k-2}f(t) - \frac{1}{t^2} B_{k-4}(t^2 f(t)).
\end{equation}
In order to prove our claim, we will show that the sequence $\{A_k\}$, $k=1,3,5,7,\dots$ satisfies the same recurrence relation as \eqref{eqD3} with the same initial conditions, and the conclusion will thus follow by induction on $k$.

Indeed, using \eqref{eqD1_5} together with the recurrence relation \eqref{eqD2_1} we get
\[
 A_kf (t)  = \frac{k-4}{2 \pi t^2} \int_0^\infty f(r)r^{k-3}s_{k-2}(rt) \, dr - \frac{1}{t^2} \int_0^\infty (r^2 f(r))r^{k-5} s_{k-4}(rt) \, dr,
\]
and thus we see that
\begin{equation}
  \label{eqD4}
  A_kf(t)=\frac{k-4}{2 \pi t^2}A_{k-2}f(t) - \frac{1}{t^2}A_{k-4}(t^2f(t)).
\end{equation}

It remains to check the base cases $k=1$ and $3$.
If $k = 1$ then $F_1(x)= f(x)$ and so $A_1f(t)=\ft{f}(t)=B_1 f(t)$.
For $k = 3$ we have $s_3(t)=2\sin(2 \pi t)/t$, and \eqref{eqD1_5} implies that
\[
  A_3f(t) =  \frac{2}{t} \int_0^\infty rf(r) \sin(2 \pi r t) \, dr.
\]
But since $f$ is an even function, this yields
\[
  A_3f(t) = \frac{1}{t} \int_{\R} rf(r) \sin(2 \pi r t) \, dr = -\frac{1}{2 \pi t} \ft{f}\;'(t) = B_3f(t),
\]
as required. This concludes the proof of \thmref{thmD1}. 
\end{proof}

\begin{remark}
  \label{remD1}
  \thmref{thmD1} provides an expression for \(\ft{F}_k(\xi)\) for all \(\xi \in \R^k\setminus\{0\}\), that is, except for the value at $\xi=0$. This value may be not easy to find directly from \eqref{eqD2} by continuity. Fortunately, using a simple argument one can obtain also an explicit expression for \(\ft{F}_k(0)\). Indeed, since $f$ is an even function, it follows from \eqref{eqD3_1} that
  \[
    \ft{F}_k(0) = \ft{\mu}_k(0) \int_0^\infty f(r) r^{k-1} \, dr
    = \tfrac1{2}  \ft{\mu}_k(0) \int_{\R} f(r) r^{k-1} \, dr.
  \]
We observe that \(\ft{\mu}_k(0)\) is the total surface area of the unit sphere in \(\R^k\), which (since $k$ is odd) is known to be equal to \(2(2\pi)^{(k-1)/2}/(k-2)!!\). This yields the expression
\begin{equation}
  \label{eqD1.5}
    \ft{F}_k(0) = -\frac{\alpha_k}{2\pi}\, \ft{f}^{\:(k-1)}(0),
\end{equation}
  where \(\alpha_k\) is defined as in \eqref{eqA1.11}. 
\end{remark}

\subsection{}
As an application of \thmref{thmD1} we consider the
Fourier transform of the surface measure 
on the $(k-1)$-dimensional unit sphere in $\R^k$.
It is well-known that for odd dimensions $k$, the
Fourier transform of this measure is expressible
as a finite sum using powers, sines and cosines
(this can be inferred e.g.\ from the recurrence relation 
given in \lemref{lemD2}).  \thmref{thmD1} allows us
to derive a closed-form expression  for the
Fourier transform of the surface measure
(such formulas are in fact known, see e.g.\ \cite{KF49}).

\begin{thm}
  \label{thmD2}
  Let $\mu_k$ be the surface measure on the $(k-1)$-dimensional unit sphere in $\R^k$. If $k$ is an odd integer, $k \geq 3$, then
 the Fourier transform of the measure $\mu_k$ is
\begin{equation}
  \label{eqD2.1}
    \ft{\mu}_k (\xi) =  \frac{2}{|\xi|^{k-2}}
    \sum_{j=0}^{(k-3)/2}
    \beta_{j,k} \, (2 \pi |\xi|)^{j} \sin \big(2 \pi |\xi| + \tfrac{\pi}{2} j \big), 
	\quad \xi \in \R^k\setminus\{0\},
\end{equation}
 where the coefficients $\beta_{j,k}$ are as in \eqref{eqA1.11}.
\end{thm}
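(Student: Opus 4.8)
The plan is to obtain \eqref{eqD2.1} by applying \thmref{thmD1} to a suitable even function $f$ whose radial extension to $\R^k$ equals (or approximates) the surface measure $\mu_k$. The subtlety is that $\mu_k$ is not a radial \emph{function} but a singular measure supported on the sphere, so I cannot feed it directly into \thmref{thmD1}. There are two natural routes. The first is a direct computation from the known Bessel-function formula $\ft{\mu}_k(\xi) = 2\pi|\xi|^{-(k-2)/2} J_{(k-2)/2}(2\pi|\xi|)$ recalled in the proof of \lemref{lemD2}, combined with the classical closed form for half-integer-order Bessel functions. The second, more in the spirit of this paper, is to recognize that the right-hand side of \eqref{eqD2.1} is precisely what one gets by matching $\ft{\mu}_k$ against the operator $B_k$ of \eqref{eqD2_6} applied to an appropriate $f$, and then to identify the correct $f$.

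I would pursue the second route. First I would observe that, since $k$ is odd, $(k-2)/2$ is a half-integer, so $J_{(k-2)/2}$ is an elementary function expressible through $\sin$ and $\cos$; thus $\ft{\mu}_k(\xi)$ is a finite combination of powers of $|\xi|$ times $\sin(2\pi|\xi|)$ and $\cos(2\pi|\xi|)$, as already noted in the text preceding the statement. The goal is then to pin down the exact coefficients. To do this cleanly, I would compare with \thmref{thmD1}: if I take the even Schwartz function $f$ to be (a smooth compactly supported bump times) the indicator of a neighborhood of the radius, the radial function $F_k(x)=f(|x|)$ has $k$-dimensional Fourier transform given by \eqref{eqD2}, and by duality $\ft{f}^{\,(j+1)}$ relates to the sphere integral. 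More directly, I would note that $\ft{\mu}_k$ can be written as the limit (or the boundary value) of $\ft{F_k^{\,\eps}}$ for radial $F_k^{\,\eps}$ concentrating on the unit sphere, and that the structure of \eqref{eqD2} transfers the derivatives $\ft{f}^{\,(j+1)}$ into the shifted sines $\sin(2\pi|\xi|+\tfrac{\pi}{2}j)$, since differentiating $\sin(2\pi t)$ repeatedly cycles through $\sin$ and $\cos$, and $\cos(2\pi t)=\sin(2\pi t+\tfrac\pi2)$ while $-\sin = \sin(\cdot+\pi)$, etc., so that the $j$-th derivative produces exactly the phase shift $\tfrac{\pi}{2}j$.

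The cleanest execution is probably to verify \eqref{eqD2.1} by induction on odd $k$ using the recurrence \eqref{eqD2_1} of \lemref{lemD2}, exactly as in the proof of \thmref{thmD1}. The base cases are $k=3$, where $s_3(t)=2\sin(2\pi t)/t$ matches \eqref{eqD2.1} with the single term $j=0$ and $\beta_{0,3}=1$, and (if needed) $k=5$. For the inductive step, substituting the proposed closed form for $s_{k-2}$ and $s_{k-4}$ into \eqref{eqD2_1} and using the elementary identity $\sin(\theta+\tfrac\pi2 j)$ together with the defining recurrences among the $\beta_{j,k}$ in \eqref{eqA1.11} should reproduce the closed form for $s_k$. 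The main obstacle I anticipate is purely bookkeeping: one must check that the coefficient recursion coming from \eqref{eqD2_1}, namely the factor $(k-4)/(2\pi t^2)$ acting on the $s_{k-2}$ sum and the $-1/t^2$ factor on the $s_{k-4}$ sum, reindexes the summation over $j$ correctly and lands on the precise $\beta_{j,k}$ given by the ratio $(-1)^j(k-j-3)!/\bigl(j!\,(k-2j-3)!!\bigr)$. Reconciling the double-factorial denominators across the two recurrence terms, and confirming that the phase shifts $\tfrac\pi2 j$ accumulate consistently when a $t^{-2}$ is absorbed into the power $(2\pi|\xi|)^j$, is the delicate point; everything else is a direct verification.
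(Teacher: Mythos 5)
Your proposal is correct, but the execution you ultimately recommend --- induction on odd $k$ via the recurrence \eqref{eqD2_1}, with base cases $k=3$ and (if needed) $k=5$ --- is a genuinely different route from the paper's proof; it is in fact precisely the alternative the paper describes in \remref{remD2_3} but opts not to carry out. The paper's actual argument is the one you sketch in your middle paragraph and then set aside: take a smooth, compactly supported, even bump $\varphi$ with $\int_\R\varphi=1$, form $f_\eps(t)=\eps^{-1}\big(\varphi((t-1)/\eps)+\varphi((t+1)/\eps)\big)$ (the second bump at $-1$ is needed so that $f_\eps$ is \emph{even}, a hypothesis of \thmref{thmD1} that your sketch glosses over), note from \eqref{eqD1_5} that $A_kf_\eps(t)\to s_k(t)$ pointwise as $\eps\to0$, and compute $\lim_{\eps\to0}B_kf_\eps(t)$ from $\ft{f}_\eps(t)=2\,\ft{\varphi}(\eps t)\cos(2\pi t)$ and the Leibniz rule, which produces exactly the phase-shifted sines $-2(2\pi)^{j+1}\sin\big(2\pi t+\tfrac{\pi}{2}j\big)$ you predict. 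What each approach buys: the approximation argument inherits the coefficients $\beta_{j,k}$ wholesale from \thmref{thmD1}, so no coefficient identities need to be re-derived --- the only computation is the limit of the derivatives --- whereas your induction must verify that the $\beta_{j,k}$ satisfy $\beta_{j,k}=\tfrac{k-4}{2\pi}\,\beta_{j,k-2}+\tfrac{1}{(2\pi)^2}\,\beta_{j-2,k-4}$ (out-of-range terms set to zero), which duplicates bookkeeping of the same kind already done for \eqref{eqD3}. Your route does close: after cancelling common factors this identity reduces to $(k-4)(k-2j-3)+j(j-1)=(k-j-3)(k-j-4)$, which holds, and the phases behave as you hope because reindexing $j\mapsto j+2$ in the $s_{k-4}$ sum gives $\sin\big(\theta+\tfrac{\pi}{2}(j-2)\big)=-\sin\big(\theta+\tfrac{\pi}{2}j\big)$, absorbing the minus sign in \eqref{eqD2_1}. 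One caution: for $k=5$ the recurrence calls on $s_1$, the Fourier transform of the ``surface measure'' on the $0$-sphere in $\R$, so either verify $k=5$ directly as a second base case (as you allow), or interpret that measure as $\delta_1+\delta_{-1}$ and take $s_1(t)=2\cos(2\pi t)$.
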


\begin{proof}
Choose a non-negative,  infinitely smooth, compactly 
supported, even function $\varphi$ on $\R$,  such that 
 $\int_{\R} \varphi(t) dt = 1$. For each $\eps>0$ define
the function
\[
f_\eps(t) := \eps^{-1} \big( \varphi((t-1)/\eps) + \varphi((t+1)/\eps) \big),
\]
then $f_\eps$ is an even function and $f_\eps \to \delta_{1}  + \delta_{-1}$  as
$\eps \to 0$ in the sense of distributions.
In particular it follows from \eqref{eqD1_5} that 
$A_k f_\eps (t)$ tends to $s_k(t)$ pointwise as $\eps \to 0$,
where  $s_k$ is the even function on $\R$ which satisfies
 $\ft{\mu}_k(\xi) = s_k(|\xi|)$, $\xi \in \R^k$. 

Recall that in the proof of \thmref{thmD1} we have shown that 
$A_k f(t) = B_k f(t)$, $t \neq 0$, for every even function $f$ in the Schwartz space,
where $B_k f(t)$ is defined as in \eqref{eqD2_6}. We can therefore
obtain the value of $s_k(t)$ for $t \neq 0$ as the limit of
$B_k f_\eps(t)$ as $\eps  \to 0$.

First we observe that
$\ft{f}_\eps(t) = 2 \, \ft{\varphi}(\eps t) \cos (2 \pi t)$.
Then by the Leibniz rule we have
\[
     \ft{f}_\eps^{\:(j+1)}(t) = 
2 \, \ft{\varphi}(\eps t) \big\{ \cos (2 \pi t) \big\}^{(j+1)} + \cdots,
  \]
 where the omitted terms tend to zero as $\eps \to 0$. This implies that
  \begin{equation}
    \label{eqD2_20}
\lim_{\eps \to 0}
     \ft{f}_\eps^{\:(j+1)}(t) = 
2  \big\{ \cos (2 \pi t) \big\}^{(j+1)} =
-2 (2 \pi)^{j+1} \sin \big(2 \pi t  + \tfrac{\pi}{2} j  \big).
  \end{equation}
Finally, combining \eqref{eqD2_6} and \eqref{eqD2_20} yields
  \begin{equation}
s_k(t) = \lim_{\eps \to 0}
    B_k f_\eps(t) = \frac{2}{t^{k-2}}
    \sum_{j=0}^{(k-3)/2}
    \beta_{j,k} \, (2 \pi t)^{j} \, \sin \big(2 \pi t  + \tfrac{\pi}{2} j  \big),
 \quad t \neq 0,
  \end{equation}
which is equivalent to the assertion in \eqref{eqD2.1}.
\end{proof}

\begin{remark}
  \label{remD2_3}
We could have proved \thmref{thmD1} and
\thmref{thmD2} also  the other way around, that is, 
at the  first stage we could have proved
\thmref{thmD2}  by induction on $k$,
showing that \eqref{eqD2.1} is valid
using the recurrence relation \eqref{eqD2_1}
and checking the base cases $k=3$ and $5$;
 while at the second stage we could have
derived \thmref{thmD1} 
based on \eqref{eqD3_1} and \eqref{eqD2.1}.
We have opted for giving a direct proof of
 \thmref{thmD1}, since it is the result that will
be used in the next section.
\end{remark}

\begin{remark}
  \label{remD2_5}
The formula \eqref{eqD2.1} for  the Fourier transform of the surface 
measure $\mu_k$ is closely related to the \emph{Bessel polynomials}
$\theta_n(z)$. This is a sequence of polynomials satisfying 
the recurrence relation 
$\theta_n(z) = (2n-1) \theta_{n-1}(z) + z^2 \theta_{n-2}(z)$
with the initial conditions
$\theta_0(z)=1$ and $\theta_1(z)=z+1$.
The $n$'th polynomial in the sequence can be written
explicitly as $\theta_n(z) = (2\pi)^n \sum_{j=0}^{n} 
\beta_{j,k} (-z)^j$, where $k = 2n+3$ and where
the coefficients  $\beta_{j,k}$ are the same as in \eqref{eqA1.11}.
We can therefore rewrite \eqref{eqD2.1} in the form
\begin{equation}
  \label{eqD2_5_1}
    \ft{\mu}_k (\xi) =  \frac{2}{|\xi|^{k-2}} \, \Im \left\{
\frac{\theta_n(-2\pi i |\xi|)}{(2 \pi)^n} \, \exp(2\pi i |\xi|)  \right\},
\quad n = \frac{k-3}{2},
\end{equation}
which clarifies the relation between the Bessel polynomials and
the Fourier transform  of the surface measure $\mu_k$ 
for odd dimensions $k$. We refer to \cite{Gro78}
for a comprehensive survey on the Bessel polynomials
(see also \cite{KF49}).
\end{remark}


\section{New Poisson type summation formulas}
\label{sec:newpoisson}

In this section we prove the results stated in \secref{sec:results}.
We also state and prove \thmref{thmA8}  that generalizes
\thmref{thmA5} to the case where the origin is allowed to belong 
to the supports $\Lam, S$ of the measures $\mu$ and $\ft{\mu}$.

\subsection{}
First we will prove \thmref{thmA5} which generalizes
\cite[Theorem 7]{Mey16} to all odd values of $k$ greater than
$3$.  Meyer's proof in the case  $k=3$ was based on the
following observation: Let a radial function $G$ on $\R^3$ 
be defined by $G(x) :=  f(|x|) / |x|$, where $f$ is an odd Schwartz 
function on $\R$. Then the three-dimensional 
Fourier transform of the function $G$ is given by
$\ft{G}(\xi) = i \ft{f}(|\xi|) / |\xi|$.

The proof of \thmref{thmA5} for an arbitrary odd integer $k \geq 3$ will follow a similar 
line, where in this case we will use the connection given in \thmref{thmD1} 
between the one-dimensional Fourier transform of an even Schwartz function on \(\R\) and the $k$-dimensional Fourier transform of a radial function on $\R^k$.

\begin{proof}[Proof of \thmref{thmA5}]
First we show that the distribution $\sig$ defined by
\eqref{eqA5.1} is indeed a temperate distribution. If
$f$ is a function in the Schwartz  space $\mathcal{S}(\R)$, 
then by \lemref{eq:hadam} we have $f(t) - f(-t) = t g(t)$ 
where $g$ is also in $\mathcal{S}(\R)$. Then $g$ is
an even function and so by \lemref{lemC3} the function
$G(x) := g(|x|)$, $x \in \R^k$, is in the Schwartz  
space $\mathcal{S}(\R^k)$.
Moreover, the mapping which takes $f$ to $G$ is a 
continuous linear mapping from the space $\mathcal{S}(\R)$
into $\mathcal{S}(\R^k)$.
Observe that if $f$ is compactly supported then we have
\begin{equation}
  \label{eqE1.1}
\dotprod{\sig}{f} =  \sum_{\lam \in \Lam} \frac{a(\lam)}{|\lam|}
    ( f(|\lam|) -  f(-|\lam|)) = 
  \sum_{\lam \in \Lam} a(\lam) G(\lam)=
\dotprod{\mu}{G}.
\end{equation}
Since the measure $\mu$ is assumed to be a temperate distribution,
the mapping $f \mapsto \dotprod{\mu}{G}$ thus
defines a continuous  linear functional on the Schwartz space 
$\mathcal{S}(\R)$ which agrees with $\sig$ on the space 
$\mathcal{D}(\R)$ of infinitely smooth, compactly 
supported functions. This means that 
$\sig$ is a temperate distribution.

Next we show that the Fourier transform of $\sig$ 
 is given by \eqref{eqA5.2}. Since $\sig$ is an odd
distribution, the Fourier transform of $\ft{\sig}$ is $-\sig$.
Hence \eqref{eqA5.2} is equivalent to saying that
  \begin{equation}
  \label{eqE1.6}
    \dotprod{\sig}{f} =
    i \sum_{s \in S} \frac{b(s)}{|s|^{k-2}}
    \Big[ \sum_{j=0}^{(k-3)/2}
       \beta_{j,k} \, |s|^j \,
      \Big( \ft{f}^{\:(j)}(|s|) -  (-1)^j \ft{f}^{\:(j)}(-|s|) \Big)
      \Big]
  \end{equation}
for every Schwartz function $f$ such that $\ft{f}$ is
compactly supported. If we keep using
$g$ and $G$ to denote the two functions related to $f$ as above, 
then by \thmref{thmD1} we have
\begin{equation}
  \label{eqE1.3}
    \ft{G}(\xi) = -\frac{1}{2\pi|\xi|^{k-1}}
    \sum_{j=0}^{(k-3)/2}
    \beta_{j,k} \, |\xi|^{j+1} \, \ft{g}^{\:(j+1)}(|\xi|)  , 
	\quad \xi \neq 0,
\end{equation}
while the property $t g(t) = f(t) - f(-t)$ implies that
\begin{equation}
  \label{eqE1.4}
\ft{g}^{\:(j+1)}(t)  = -2\pi i \big(\ft{f}^{\:(j)}(t) - (-1)^j \ft{f}^{\:(j)}(-t)\big).
\end{equation}
It now follows using \eqref{eqE1.3} and \eqref{eqE1.4} that
the right-hand side of \eqref{eqE1.6} is equal to
\begin{equation}
  \label{eqE1.7}
 \sum_{s \in S} b(s) \ft{G}(s) =  \dotprod{\ft{\mu}}{\ft{G}}
= \dotprod{\mu}{G} = \dotprod{\sig}{f}
\end{equation}
as we had to show. (We note that in the second equality in \eqref{eqE1.7} we used
the fact that the Fourier transform of $\ft{G}$ is $G$,
which follows from the property $G(-x)=G(x)$.)
\end{proof}

\subsection{}
\corref{corA6} now follows easily as a special case of \thmref{thmA5}. 

 \begin{proof}[Proof of \corref{corA6}]
We apply \thmref{thmA5} to the measure
\[
\mu_{(\eta,\xi)} = \sum_{m \in \Z^k} e^{2 \pi i \dotprod{m}{\xi}} \delta_{m + \eta}
\]
where the vectors $\eta$ and $\xi$ are in $\R^k \setminus \Z^k$. 
By Poisson's summation formula, the Fourier transform of 
$\mu_{(\eta,\xi)}$ is the measure
 $\ft{\mu}_{(\eta,\xi)} = e^{- 2 \pi i \dotprod{\eta}{\xi}} \mu_{(\xi,-\eta)}$.
Since neither $\eta$ nor $\xi$ belongs to $\Z^k$, the origin lies in neither
the support of the measure  $\mu_{(\eta,\xi)}$ nor the support of
its Fourier transform. Hence all the assumptions
in \thmref{thmA5} are satisfied and the assertion in \corref{corA6} follows.
\end{proof}

\subsection{}
Now we give a more general version of \thmref{thmA5}, in which the origin is allowed to lie in the support $\Lam$ of the measure $\mu$ or the support $S$ of its Fourier transform $\ft{\mu}$ (or both). The result can be stated as follows:

\begin{thm}
  \label{thmA8}
  Let $\mu$  be a measure satisfying 
the same assumptions as in \thmref{thmA5} except that $\Lam$ and $S$ 
may contain the origin. Then 
  \begin{equation}
    \label{eqA8.1}
    \sig := -2 a(0) \delta'_0 + \sum_{\lam \in \Lam \setminus \{0\}}
	 \frac{a(\lam)}{|\lam|}   ( \delta_{|\lam|} -  \delta_{-|\lam|})
  \end{equation}
  is a temperate distribution on $\R$ whose one-dimensional Fourier transform is
  \begin{equation}
    \label{eqA8.2}
    \ft{\sig} =  2i b(0) \alpha_k  \delta^{(k-2)}_0
    - i \sum_{s \in S \setminus \{0\}} \frac{b(s)}{|s|^{k-2}}
    \Big[ \sum_{j=0}^{(k-3)/2}
       \beta_{j,k} \, |s|^j \,
      \Big( (-1)^j \, \delta^{(j)}_{|s|} - \delta^{(j)}_{-|s|}\Big)
      \Big],
  \end{equation}
  where the coefficients $\alpha_k$ and $\beta_{j,k}$ are defined as in \eqref{eqA1.11}.
\end{thm}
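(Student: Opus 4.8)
The plan is to follow the same strategy as in the proof of \thmref{thmA5}, the only new feature being the bookkeeping at the origin, which is precisely what the correction terms $-2a(0)\delta'_0$ in \eqref{eqA8.1} and $2ib(0)\alpha_k\delta^{(k-2)}_0$ in \eqref{eqA8.2} are designed to absorb. Given a Schwartz function $f$, I would first apply \lemref{lemC2} to write $f(t)-f(-t)=tg(t)$ with $g$ an even function in $\mathcal{S}(\R)$, and then set $G(x):=g(|x|)$, which lies in $\mathcal{S}(\R^k)$ by \lemref{lemC3}. The key new observation is that differentiating $tg(t)=f(t)-f(-t)$ at $t=0$ gives $g(0)=2f'(0)$, hence $G(0)=2f'(0)$; this is the value that the atom $a(0)\delta_0$ of $\mu$ will see.

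First I would show that $\sig$ is a temperate distribution and that the identity $\dotprod{\sig}{f}=\dotprod{\mu}{G}$ continues to hold, now including the origin. For compactly supported $f$ one computes $\dotprod{-2a(0)\delta'_0}{f}=2a(0)f'(0)=a(0)G(0)$, while for $\lam\neq0$ one has $(f(|\lam|)-f(-|\lam|))/|\lam|=g(|\lam|)=G(\lam)$, exactly as in \eqref{eqE1.1}. Summing these gives $\dotprod{\sig}{f}=a(0)G(0)+\sum_{\lam\neq0}a(\lam)G(\lam)=\dotprod{\mu}{G}$. Since $f\mapsto G$ is a continuous linear map into $\mathcal{S}(\R^k)$ and $\mu$ is temperate, the functional $f\mapsto\dotprod{\mu}{G}$ is continuous on $\mathcal{S}(\R)$ and agrees with $\sig$ on $\mathcal{D}(\R)$, so $\sig$ is temperate.

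Next I would compute $\ft\sig$. As $\sig$ is odd, $\ft{\ft\sig}=-\sig$, so \eqref{eqA8.2} reduces, after testing against $f$ and exactly as in the proof of \thmref{thmA5}, to verifying for every $f$ with $\ft f$ compactly supported that $\dotprod{\sig}{f}$ equals $2ib(0)\alpha_k\ft{f}^{(k-2)}(0)$ plus the sum over $s\neq0$ of $i\,|s|^{-(k-2)}b(s)\sum_{j}\beta_{j,k}|s|^{j}\big(\ft{f}^{(j)}(|s|)-(-1)^j\ft{f}^{(j)}(-|s|)\big)$. Using $\ft{\ft G}=G$ (valid since $G$ is even) I write $\dotprod{\sig}{f}=\dotprod{\mu}{G}=\dotprod{\ft\mu}{\ft G}=\sum_{s\in S}b(s)\ft G(s)$ and split off the term $s=0$. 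For $s\neq0$ the contribution is handled verbatim as in \thmref{thmA5}, combining \thmref{thmD1} with \eqref{eqE1.4}. For $s=0$, \remref{remD1} (equation \eqref{eqD1.5}) gives $\ft G(0)=-\tfrac{\alpha_k}{2\pi}\ft{g}^{(k-1)}(0)$; plugging \eqref{eqE1.4} with $j=k-2$ and using $(-1)^{k-2}=-1$ (as $k$ is odd) yields $\ft{g}^{(k-1)}(0)=-4\pi i\,\ft{f}^{(k-2)}(0)$, whence $\ft G(0)=2i\alpha_k\ft{f}^{(k-2)}(0)$ and $b(0)\ft G(0)=2ib(0)\alpha_k\ft{f}^{(k-2)}(0)$, which is exactly the required origin term.

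The routine parts (temperateness and the sum over $s\neq0$) are inherited directly from \thmref{thmA5}. The main point requiring care is the origin bookkeeping: verifying that $g(0)=2f'(0)$ so that the atom $a(0)\delta_0$ of $\mu$ is matched by the $\delta'_0$ term of $\sig$, and then tracking the constants and the sign $(-1)^{k-2}=-1$ in the evaluation of $\ft G(0)$ so that $b(0)\delta_0$ produces precisely the coefficient $2i\alpha_k$ on $\delta^{(k-2)}_0$. No genuinely new analytic difficulty arises beyond \thmref{thmA5} and \remref{remD1}.
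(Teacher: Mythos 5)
Your proof is correct and follows essentially the same route as the paper's: reuse the construction $f(t)-f(-t)=tg(t)$, $G(x)=g(|x|)$ from Theorem~\ref{thmA5}, note $g(0)=2f'(0)$ so that $\dotprod{\sig}{f}=\dotprod{\mu}{G}$ persists when $0\in\Lam$, then split off the $s=0$ term and evaluate $\ft{G}(0)$ via Remark~\ref{remD1} and \eqref{eqE1.4}. Your explicit sign bookkeeping ($(-1)^{k-2}=-1$, giving $\ft{G}(0)=2i\alpha_k\ft{f}^{(k-2)}(0)$) matches the paper's computation exactly, just spelled out in more detail.
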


If the support $\Lam$ of the measure $\mu$
does not contain the origin, then we understand 
the coefficient $a(0)$ in \eqref{eqA8.1} to be zero; and similarly,
if the origin does not belong to the support $S$ of the measure $\ft{\mu}$,
then the coefficient $b(0)$ in \eqref{eqA8.2} is zero.
If the origin lies in neither $\Lam$ nor $S$, the assertion
in \thmref{thmA8} reduces to that of \thmref{thmA5}.

 \begin{proof}[Proof of \thmref{thmA8}]
We keep using the same notation as in the proof of \thmref{thmA5}.
 We recall that if $f$ is a function  in the Schwartz  space 
$\mathcal{S}(\R)$ then there is $g \in \mathcal{S}(\R)$
such that $f(t) - f(-t) = t g(t)$. Then by the continuity of $g$ we
have $g(0) = 2 f'(0)$, and the equality $\dotprod{\sig}{f} =  
\dotprod{\mu}{G}$ used in \eqref{eqE1.1} remains valid
also in the case when $0 \in \Lam$.

To obtain the Fourier transform of $\sig$ we argue as before.
If $f$ is a Schwartz function such that $\ft{f}$ is
compactly supported, then
\begin{equation}
  \label{eqE2.1}
\dotprod{\sig}{f} = \dotprod{\mu}{G} = 
  \dotprod{\ft{\mu}}{\ft{G}} 
= b(0) \ft{G}(0) + \sum_{s \neq 0} b(s) \ft{G}(s).
\end{equation}
If we apply \remref{remD1} to the function $G$, and use
\eqref{eqE1.4} then we obtain
\[
   \ft{G}(0)  = -\frac{\alpha_k}{2\pi}\, \ft{g}^{\:(k-1)}(0)
 =  2 i \alpha_k \ft{f}^{\:(k-2)}(0).
\]
It thus follows from \eqref{eqE2.1} that
  \begin{equation}
  \label{eqE2.2}
    \dotprod{\sig}{f} =
 2 i b(0) \alpha_k \ft{f}^{\:(k-2)}(0) + 
     i \sum_{s \neq 0 } \frac{b(s)}{|s|^{k-2}}
    \Big[ \; \cdots \; \Big],
  \end{equation}
where the omitted terms in the square brackets are 
as in \eqref{eqE1.6}. This confirms \eqref{eqA8.2}.
\end{proof}

\subsection{}
Finally we  prove \thmref{thmA3} 
that extends Guinand's formula \eqref{eqA1.3} to all odd values
 of $k$ greater than $3$. Guinand's formula is
equivalent to the assertion that $\ft{\sig}_3 = -i \sig_3$,
where $\sig_k$ is the distribution 
defined in \eqref{eqA1.1} that involves
the sum-of-$k$-squares function $r_k(n)$.
\thmref{thmA3} is a consequence of the
next result which provides an expression for the Fourier transform 
$\ft{\sig}_k$ for  all odd values
 of $k$ greater than $3$.

\begin{thm}
  \label{thmE1}
  Let \(k\) be an odd integer, \(k\geq 3\). Then the Fourier transform of the temperate distribution
  \begin{equation*}
    \sigma_k := -2 \delta'_0 + \sum_{n = 1}^\infty \frac{r_{k}(n)}{\sqrt{n}}\left( \delta_{\sqrt{n}} - \delta_{-\sqrt{n}} \right)
  \end{equation*}
  is
  \begin{equation}
    \label{eqE2}
    \ft{\sig}_k =  2 i \alpha_k  \delta^{(k-2)}_0
    - i \sum_{n=1}^{\infty} \frac{r_k(n)}{n^{(k-2)/2}}
    \Big[ \sum_{j=0}^{(k-3)/2}
     \beta_{j,k} \, n^{j/2} \,
    \Big( (-1)^j \, \delta^{(j)}_{\sqrt{n}} - \delta^{(j)}_{-\sqrt{n}}\Big)
    \Big].
  \end{equation}
\end{thm}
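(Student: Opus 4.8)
The plan is to obtain \thmref{thmE1} as an immediate consequence of \thmref{thmA8}, applied to the Poisson measure on \(\R^k\). Concretely, I would take
\[
\mu = \sum_{m \in \Z^k} \delta_m,
\]
whose support is \(\Lam = \Z^k\) with all weights \(a(\lam) = 1\). This measure is a temperate distribution, since the number of lattice points in a ball of radius \(R\) grows like \(R^k\); its support \(\Z^k\) is uniformly discrete, hence locally finite; and by the classical Poisson summation formula its Fourier transform is \(\ft{\mu} = \mu\), so that \(\ft{\mu}\) is again a measure with the same locally finite support \(S = \Z^k\) and weights \(b(s) = 1\). Thus all the hypotheses of \thmref{thmA8} are met, with the origin lying in both \(\Lam\) and \(S\).

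The next step is a purely combinatorial regrouping. Since \(a(0) = 1\) and \(a(\lam) = 1\) for every other lattice point, the distribution \(\sig\) produced by \eqref{eqA8.1} is
\[
\sig = -2\delta'_0 + \sum_{\lam \in \Z^k \setminus \{0\}} \frac{1}{|\lam|} (\delta_{|\lam|} - \delta_{-|\lam|}).
\]
I would now collect the lattice points according to the value of \(|\lam|^2\): by the very definition \eqref{eqA1.2}, there are exactly \(r_k(n)\) points \(\lam \in \Z^k\) with \(|\lam|^2 = n\), and each of these satisfies \(|\lam| = \sqrt{n}\). Grouping the terms therefore turns the sum into \(\sum_{n=1}^\infty \tfrac{r_k(n)}{\sqrt{n}} (\delta_{\sqrt{n}} - \delta_{-\sqrt{n}})\), so that \(\sig\) coincides with \(\sig_k\).

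Finally, I would apply the identical regrouping on the Fourier side. Because \(b(0) = 1\) and \(b(s) = 1\) for \(s \neq 0\), the formula \eqref{eqA8.2} splits into the term \(2 i \alpha_k \delta_0^{(k-2)}\) and a sum over \(s \in \Z^k \setminus \{0\}\), which upon collecting the points with \(|s|^2 = n\) (again \(r_k(n)\) of them, each with \(|s| = \sqrt{n}\)) becomes precisely the right-hand side of \eqref{eqE2}. The only point requiring care is the legitimacy of reordering and collecting terms in these distributional series; this is not a genuine obstacle, however, since each series is locally finite — only finitely many nodes \(\pm\sqrt{n}\) fall in any bounded interval — so the terms sharing a common value of \(|\lam|\) (respectively \(|s|\)) may be freely combined. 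With these identifications the assertion of \thmref{thmE1} follows directly from \thmref{thmA8}.
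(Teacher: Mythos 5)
Your proposal is correct and coincides with the paper's own proof, which likewise obtains \thmref{thmE1} by applying \thmref{thmA8} to the Poisson measure $\mu = \sum_{m \in \Z^k} \delta_m$ with $\ft{\mu} = \mu$. The verification of the hypotheses and the regrouping of lattice points by $|\lam|^2 = n$ into the weights $r_k(n)$, which you spell out, is exactly what the paper's one-line proof leaves implicit.
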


\begin{proof}
We apply \thmref{thmA8} to the measure
$\mu = \sum_{m \in \Z^k} \delta_m$ which satisfies
$\ft{\mu} = \mu$.
\end{proof}


\section{Remark}

It is natural to ask what can be said about the Fourier transform of
the distribution $\sig_k$ defined in  \eqref{eqA1.1}
when $k$ is a positive \emph{even} integer
(i.e.\ $k=2,4,6,8,\dots$). We expect that for these values of $k$
the Fourier transform $\ft{\sig}_k$ does not 
have a discrete support. It would be interesting
to have a proof of this claim, but we do not address this problem 
in the present work.



\begin{thebibliography}{999999}

  \bibitem[Fav18]{Fav18}
  S. Yu. Favorov,
  Tempered distributions with discrete support and spectrum.
  Bull. Hellenic Math. Soc. \textbf{62} (2018), 66--79.


  \bibitem[GT13]{GT13}
  L. Grafakos, G. Teschl,
  On Fourier transforms of radial functions and distributions.
  J. Fourier Anal. Appl. \textbf{19} (2013), no. 1, 167--179.

  \bibitem[Gro78]{Gro78}
  E. Grosswald, Bessel polynomials. 
  Lecture Notes in Mathematics, 698. Springer, Berlin, 1978.

  \bibitem[Gui59]{Gui59}
  A. P. Guinand,
  Concordance and the harmonic analysis of sequences.
  Acta Math. \textbf{101} (1959), 235--271.

  \bibitem[Kol16]{Kol16}
  M. N. Kolountzakis,
  Fourier pairs of discrete support with little structure.
  J. Fourier Anal. Appl. \textbf{22} (2016), no. 1, 1--5.

  \bibitem[KF49]{KF49}
H. L. Krall, O. Frink, 
A new class of orthogonal polynomials: The Bessel polynomials.
Trans. Amer. Math. Soc. \textbf{65} (1949), 100--115.

  \bibitem[Lag00]{Lag00}
  J. C. Lagarias,
  Mathematical quasicrystals and the problem of diffraction.
  Directions in mathematical quasicrystals, 61--93,
  CRM Monogr. Ser., 13, Amer. Math. Soc., Providence, RI, 2000.

  \bibitem[Leb72]{Leb72}
  N. N. Lebedev, 
  Special functions and their applications,
  Revised edition.
  Dover Publications, New York, 1972.

  \bibitem[LO13]{LO13}
  N. Lev, A. Olevskii,
  Measures with uniformly discrete support and spectrum.
  C. R. Math. Acad. Sci. Paris \textbf{351} (2013), no. 15-16, 599--603.

  \bibitem[LO15]{LO15}
  N. Lev, A. Olevskii,
  Quasicrystals and Poisson's summation formula.
  Invent. Math. \textbf{200} (2015), no. 2, 585--606.

  \bibitem[LO16]{LO16}
  N. Lev, A. Olevskii,
  Quasicrystals with discrete support and spectrum.
  Rev. Mat. Iberoam. \textbf{32} (2016), no. 4, 1341--1352.

  \bibitem[LO17]{LO17}
  N. Lev, A. Olevskii,
  Fourier quasicrystals and discreteness of the diffraction spectrum.
  Adv. Math. \textbf{315} (2017), 1--26.

  \bibitem[Mey95]{Mey95}
  Y. Meyer,
  Quasicrystals, Diophantine approximation and algebraic numbers.
  Beyond quasicrystals (Les Houches, 1994), 3--16, Springer, Berlin, 1995.

  \bibitem[Mey16]{Mey16}
  Y. Meyer,
  Measures with locally finite support and spectrum.
  Proc. Natl. Acad. Sci. USA \textbf{113} (2016), no. 12, 3152--3158.

  \bibitem[Mey17]{Mey17}
  Y. Meyer,
  Measures with locally finite support and spectrum.
  Rev. Mat. Iberoam. \textbf{33} (2017), no. 3, 1025--1036.

  \bibitem[RV19]{RV19}
  D. Radchenko, M. Viazovska,
  Fourier interpolation on the real line.
  Publ. Math. Inst. Hautes \'Etudes Sci. \textbf{129} (2019), 51--81.

  \bibitem[Rud91]{Rud91}
  W. Rudin, 
  Functional analysis, Second edition. McGraw-Hill, New York, 1991.

  \bibitem[SW71]{SW71}
  E. M. Stein, G. Weiss,
  Introduction to Fourier analysis on Euclidean spaces.
  Princeton University Press, 1971.

  \bibitem[Via18]{Via18}
  M. Viazovska,
  Sharp sphere packings.
  Proc. Int. Cong. of Math. 2018, Vol. 2, 455--466.

\end{thebibliography}
\end{document}